\newcommand{\cF}{{\mathcal{F}}}
\newcommand{\cE}{{\mathcal{E}}}
\newcommand{\cO}{{\mathcal{O}}}
\newcommand{\cL}{{\mathcal{L}}}
\newcommand{\cP}{{\mathcal{P}}}
\newcommand{\Ps}{{\mathbf{P}}}
\newcommand{\Z}{{\mathbf{Z}}}
\newcommand{\C}{{\mathbf{C}}}
\newcommand{\Q}{{\mathbf{Q}}}
\newcommand{\A}{{\mathbf{A}}}
\newcommand{\F}{{\mathbf{F}}}
\newcommand{\cQ}{{\mathcal{Q}}}
\newcommand{\str}{{\mathcal{O}}}
\renewcommand{\phi}{\varphi}
    \newtheorem{lemma}{Lemma}[section]
    \newtheorem{proposition}[lemma]{Proposition}
    \newtheorem{theorem}[lemma]{Theorem}
    \newtheorem{corollary}[lemma]{Corollary}
   \theoremstyle{definition}
    \newtheorem{definition}[lemma]{Definition}
    \DeclareMathOperator{\rank}{rank}
    \DeclareMathOperator{\Pic}{{Pic}}
\DeclareMathOperator{\sing}{{sing}}
\DeclareMathOperator{\MW}{{MW}}
\DeclareMathOperator{\sm}{{sm}}
\DeclareMathOperator{\Div}{{Div}}
\begin{document}
\title[Rank of elliptic $n$-folds]{The average rank of  elliptic $n$-folds}
\author{Remke Kloosterman}
\address{Institut f\"ur Mathematik, Humboldt-Universit\"at zu Berlin, Unter den Linden 6, D-10099 Berlin, Germany}
\email{klooster@math.hu-berlin.de}

\begin{abstract} Let $V/\F_q$ be a variety of dimension at least two. We show that the density of  elliptic curves $E/\F_q(V)$ with positive rank is zero if $V$ has dimension at least 3 and is at most $1-\zeta_V(3)^{-1}$ if $V$ is a surface.
\end{abstract}
\subjclass{}
\keywords{}
\date{\today}
\thanks{The author would like to thank Torsten Ekedahl for pointing out a proof for Proposition~\ref{prpMV},
 Miles Reid for providing the reference \cite[Th\'eor\`eme XI.3.13]{SGA2} and Matthias Sch\"utt and Orsola Tommasi for giving several comments on a previous version of this paper. The author would like to thank the referee for suggesting improvements in the presentation.}
\maketitle

\section{Introduction}\label{secInt}
Let $N$ be a positive integer, consider the set $\mathcal{EC}_N$ consisting of elliptic curves $y^2=x^3+Ax+B$ such that $A,B\in \Z$, the Weierstrass equation is minimal (i.e., there is no $u\neq \pm 1$ such that $u^4$ divides $A$ and $u^6$ divides $B$) and such that the absolute value of the discriminant $| 4A^3+27B^2 |$ is at most $N$. Consider the ``average rank''
\[\lim_{N \to \infty} \frac{\sum_{ E\in \mathcal{EC}_N} \rank E(\Q)  }{\# \mathcal{EC}_N}.\]
Not very much is known about this limit, even its existence is not clear. Recently, Bhargava and Shankar \cite{BhaShaTer} announced that the limsup of this sequence is at most $1.17$ and that the liminf is nonzero.
It is conjectured that the average rank is  $1/2$. Moreover, there is a collection of refined conjectures. One of these is that for $\epsilon\in \{0,1\}$ we have
\[ \mu_{\epsilon}:=\lim_{N \to \infty} \frac{\#\{ E\in \mathcal{EC}_N \mid  \rank E(\Q)= \epsilon  \}}{\# \mathcal{EC}_N}=\frac{1}{2}.\]
In particular, the elliptic curves with rank at least 2 have density zero among all elliptic curves.

We switch to the case of elliptic curves over function fields of curves. I.e., we fix a geometrically irreducible curve $C/\F_q$ and consider the set of isomorphism classes of elliptic curves $E/\F_q(C)$. We order these curves by the degree of the minimal discriminant. Since for a given degree there are only finitely many isomorphism classes of elliptic curves with minimal discriminant of that degree, we can define $\mu_\epsilon$ for elliptic curves over $\F_q(C)$ and we can define the average rank of elliptic curves over $\F_q(C)$.
Again one expects  $\mu_0=\mu_1=1/2$.

In the number field cases these conjectures are justified by a combination of the Birch--Swinnerton-Dyer conjecture and several conjectures on (the $L$-series of) modular forms. In the function field case one exploits that  an elliptic curve $E/\F_q(C)$ has an associated  surface $S/\F_q$.  The characteristic polynomial of Frobenius on the second cohomology of $S$ behaves ``analogously'' to the $L$-series of a modular form and one uses this to justify the conjectures in this case.
For an extended discussion of this see e.g. \cite[Introduction]{KatzLfunc}, for a different point of view see \cite{dJonES}.

The purpose of this paper is to show that analogous conjectures do not hold if one considers elliptic curves over $\F_q(V)$, where $V$ is a variety of dimension at least 2.

To ease the presentation we discuss in the introduction only the case where the characteristic of $\F_q$ is at least 5.  The main results, however, hold also in characteristic 2 and 3.

To formulate our main result we need to fix some more notation: Let $V/\F_q$ be  a smooth geometrically irreducible projective variety of dimension at least 2. Let $\cL$ be an ample line bundle on $V$. 
Then an element of $f\in K(V)$ can be written as $f=s/t$ with $s,t\in H^0(\cL^k), t\neq 0$ for some $k>0$. The degree of  $f$ is the minimal possible such $k$ in a representation $f=s/t$.
After replacing $\cL$ by some tensorpower, if necessary, we may assume that $h^0(\cL)\neq 0$. Fix a nonzero element $t\in H^0(\cL)$, let $R:=\oplus_{k\geq 0} H^0(\cL^k)$.
Then for  each elliptic curve $E/\F_q(V)$ we can find a Weierstrass equation $y^2=x^3+A/t^{4k}x+B/t^{6k}$ such that $A\in H^0(\cL^{4k}), B\in H^0(\cL^{6k})$. Moreover, we may assume that there is no $f\in R\setminus \F_q$ such that $f^4$ divides $A$ and $f^6$ divides $B$. Under this assumption the pair $(A,B)$ is uniquely determined by $E$ up to the action of $\F_q^*$ with weight $(4,6)$ on $(A,B)$. We call such pairs $(A,B)$ minimal.

For a scheme $X/\F_q$ define Weil's zeta function
\[ \zeta_X(s)=Z_X(q^{-s}):=\prod_{P\in X \mbox{closed}}(1- q^{-\deg(P)s})^{-1} =  \exp \left( \sum_{r=1}^\infty  \frac{\#X(\F_{q^r})}{r} q^{-rs} \right).\] 

The main result we prove is the following:
\begin{theorem}\label{mainThm} Let $R$, $t$  and $V/\F_q$ be as above. Let 
\[ \mathcal{E}_N=\left\{ E_{A,B} : y^2=x^3+At^{-4k}x+Bt^{-6k} \left| \begin{array}{l}A,B \in H^0(\cL^{4k})\times H^0(\cL^{6k})\\ \mbox{for some } k\leq N,\\ (A,B) \mbox{ is minimal} \\ \mbox {and }  4A^3+27B^2\neq 0\end{array}\right. \right\}. \]
Then 
\[ \mu_0:=\liminf_{N\to\infty} \frac{\# \{ E_{A,B} \in  \mathcal{E}_N \mid \rank E_{A,B}(\F_q(V))=0\}}{\# \mathcal{E}_N } \geq \zeta_{V}(\dim V+1)^{-1}.\]
Moreover, if $\dim V\geq 3$ then $ \mu_0=1$.
\end{theorem}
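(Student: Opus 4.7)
The plan is to pass from the elliptic curve $E_{A,B}/\F_q(V)$ to its Weierstrass elliptic $n$-fold
\[ X_{A,B} : \{y^2 z = x^3 + A x z^2 + B z^3\} \subset W := \Ps_V\bigl(\cO_V \oplus \cL^{2k} \oplus \cL^{3k}\bigr), \]
fibred over $V$ with generic fibre $E_{A,B}$, and to control its Néron--Severi lattice. A Shioda--Tate-type identity (applied after desingularising $X_{A,B}$) gives
\[ \rank E_{A,B}(\F_q(V)) = \rank \NS(X_{A,B}) - \rank \NS(V) - 1 - \sum_v (m_v - 1), \]
where the sum runs over codimension-one points $v \in V$ and $m_v$ is the number of components of the fibre over $v$. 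When $(A,B)$ is minimal and the discriminant $\Delta = 4A^3 + 27B^2$ is squarefree, every codimension-one fibre is an irreducible nodal cubic, so the sum vanishes and the rank question reduces to computing $\NS(X_{A,B})$.

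For $\dim V \geq 3$ we have $\dim X_{A,B} = \dim V + 1 \geq 4$, and Grothendieck's Lefschetz theorem (SGA 2, Exp.\ XI, Th.\ 3.13) applies to the smooth ample hypersurface $X_{A,B} \subset W$: the restriction $\Pic(W) \to \Pic(X_{A,B})$ is an isomorphism. Since $\Pic(W) = \pi^*\Pic(V) \oplus \Z \cdot [\cO_W(1)]$ and the zero section is a fixed $\Z$-combination of these two summands restricted to $X_{A,B}$, the Shioda--Tate formula forces $\rank E_{A,B}(\F_q(V)) = 0$. A Poonen-style squarefree sieve shows that the "good" locus (minimal $(A,B)$ with $X_{A,B}$ smooth and $\Delta$ squarefree) has density tending to $1$ in $\cE_N$ as $N \to \infty$, giving $\mu_0 = 1$.

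For $\dim V = 2$ the Grothendieck--Lefschetz input is unavailable, since $\dim X_{A,B} = 3$. The weaker bound $\mu_0 \geq \zeta_V(3)^{-1}$ will come from two ingredients: first, a Poonen-type sieve shows that the density of minimal $(A,B) \in \cE_N$ with squarefree $\Delta$ is at least $\zeta_V(\dim V + 1)^{-1} = \zeta_V(3)^{-1}$ as $N \to \infty$ (the exponent $\dim V + 1$ being exactly the Poonen exponent for squarefree sections of an ample line bundle on $V$); second, for each such pair the extra Picard classes needed to produce positive Mordell--Weil rank would have to lie in a Noether--Lefschetz locus of positive codimension in parameter space, and a Bertini-style count shows those have density zero. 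The main obstacle is this Noether--Lefschetz step over $\F_q$: one has to rule out Picard-rank jumps accumulating densely among pairs of bounded degree, which has to be analysed directly (e.g.\ by bounding the degrees of possible extra divisor classes and sieving them out) rather than derived from a clean Lefschetz statement, as in the $\dim V \geq 3$ case.
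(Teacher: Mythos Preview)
Your $\dim V\geq 3$ argument contains a genuine gap: the density of pairs $(A,B)$ for which the Weierstrass model $X_{A,B}$ is \emph{smooth} is not $1$ but $\zeta_V(\dim V+1)^{-1}<1$. The Poonen sieve you invoke computes exactly this; the local probability at a closed point $P$ that $X_{A,B}$ acquires a singularity over $P$ is $q^{-(\dim V+1)\deg P}$, and the product over all $P$ gives $\zeta_V(\dim V+1)^{-1}$ regardless of $\dim V$. So requiring smoothness and applying the Lefschetz isomorphism $\Pic(W)\cong\Pic(X_{A,B})$ only yields $\mu_0\geq\zeta_V(\dim V+1)^{-1}$, not $\mu_0=1$. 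The paper gets $\mu_0=1$ by weakening the hypothesis: SGA~2, Exp.~XI, Th.~3.13 actually shows that a hypersurface which is merely \emph{smooth in codimension $3$} is $\Q$-factorial. One then proves directly (via a Lefschetz-type computation of $H^2(X_{A,B})$ and the observation that the elliptic involution $\iota$ acts trivially there) that a section of infinite order would produce a Weil divisor whose class is $\iota$-anti-invariant and hence cannot be $\Q$-Cartier. For $\dim V\geq 3$, ``smooth in codimension $3$'' allows a singular locus of positive dimension in $V$ to fail, and the Poonen medium/high-degree lemmas show that a non-isolated singular locus (hence containing points of arbitrarily large degree) occurs with density $0$; this is what forces $\mu_0=1$.

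For $\dim V=2$ your outline has a second gap: you reduce to a Noether--Lefschetz statement over $\F_q$ that you explicitly do not prove. The paper avoids this entirely. When $\dim V=2$, ``smooth in codimension $3$'' simply means $X_{A,B}$ is smooth, and then the same $\Q$-factoriality plus $\iota$-invariance argument above already gives $\rank E_{A,B}(\F_q(V))=0$, with no Noether--Lefschetz input required. The density of smooth $X_{A,B}$ is then exactly $\zeta_V(3)^{-1}$ by the sieve, and this yields the bound directly. (Note also that the relevant condition throughout is smoothness of the total space $X_{A,B}$, not squarefreeness of $\Delta$; these are not the same, and $\Delta=4A^3+27B^2$ is not a generic section of $\cL^{12k}$, so the squarefree sieve does not apply to it without further argument.)
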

In the case $\dim V=2$ we have that $\mu_0>1/2$ if $\zeta_{V}(3)\leq 2$. 
This inequality is achieved  if we replace $V/\F_q$ by $V_{\F_{q^r}}/\F_{q^r}$ for $r$ sufficiently large.

The proof of both results consists of two parts. The first part is geometric. Each $E_{A,B} \in \mathcal{E}_k$ yields a hypersurface $Y_{A,B}$ in $\Ps(\str\oplus \cL^{-2k}\oplus \cL^{-3k})$.  We show that if the rank of $E_{A,B}$ is positive then $Y_{A,B}$ has a Weil divisor that is not $\Q$-Cartier. This is the point in the proof where we use $\dim V>1$. I.e., this claim is false if $\dim V=1$.
Since $Y_{A,B}$ has a Weil-divisor that is not $\Q$-Cartier and $Y_{A,B}$ has only hypersurface singularities, it follows from  \cite[Th\'eor\`eme XI.3.13]{SGA2} that  $Y_{\sing}$ has codimension 2 or 3.

We then use this geometric result to find a lower bound for $\mu_0$. This is done by applying a modification of Poonen's Bertini Theorem over finite fields \cite{Poo}. This (modified) result yields directly the densities mentioned in Theorem~\ref{mainThm}.

It seems unlikely that the bound for $\mu_0$ in the two-dimensional case is sharp. Using the ideas presented in \cite{ell3HK} one expects that if  $E/\C(s,t)$ has positive  Mordell--Weil rank then the corresponding hypersurface $Y$ in $\Ps(\str\oplus \cL(-2k) \oplus \cL(-3k))$ has Milnor number at least $12\deg(\cL)^2 k^2$. A similar result should hold in  positive characteristic, i.e., one needs a lot of singularities on $Y$ in order to have positive Mordell--Weil rank. In our density calculation we only use that $Y$ is singular. However, we did not manage to adapt the above observation in the density calculations.

The organization of this paper is as follows:
In Section~\ref{sectMod} we discuss the algebraic part of the proof, i.e., we prove that if $\rank E(\overline{\F_q}(V))$ is positive then the model in $\Ps(\str \oplus \cL^{-4k}\oplus \cL^{-6k})$ is singular in codimension 3. In Section~\ref{sectDen} we calculate the densities.

\section{Model in a $\Ps^2$-bundle}\label{sectMod}
Let $p$ be a prime number. Let $q=p^r$ be a prime power. Let $V/\F_q$ be a smooth and geometrically irreducible variety of dimension $n-1$, with $n\geq 3$. 

Let $\cL$ be a very ample line bundle on $V$ and $k>0$ be an integer. Define $\cE_k=\cO\oplus \cL^{-2k} \oplus \cL^{-3k}$.
Then $\Ps(\cE_k)$ is a $\Ps^2$-bundle over $V$. Let $\pi: \Ps(\cE_k) \to V$ be the bundle projection.
We use Grothendieck's definition of projective space, in particular,
$\pi_*\cO_{\Ps(\cE_k)}(1)=\cE_k$.
 Fix sections \begin{eqnarray*}
X:=(0,1,0)& \in & H^0(\cL^{2k} \oplus \cO \oplus \cL^{-k})=H^0(\str_{\Ps(\cE_k)}(1)
\otimes \cL^{2k}),\\
 Y:=(0,0,1) &\in& H^0(\cL^{3k} \oplus \cL^k \oplus \cO)=H^0(\str_{\Ps(\cE_k)}(1)
\otimes \cL^{3k}),\\ 
 Z:=(1,0,0) &\in& H^0(\cO \oplus \cL^{-2k}\oplus \cL^{-3k})=H^0(\str_{\Ps(\cE_k)}(1)
).\end{eqnarray*}

For $i=1,2,3,4,6$ fix sections $a_i$ in $H^0(\cL^{ik})$. Then
\begin{equation}\label{HSinPB} Y^2Z+a_1XYZ+a_3YZ^2=X^3+a_2X^2Z+a_4XZ^2+a_6Z^3\end{equation}
defines a  hypersurface $W$ in $\Ps(\cE)$. Let $\Delta \in H^0(\cL^{12k})$ be the discriminant of this equation. If $\Delta$ does not vanish on all of $V$ then the general fiber of the projection $\pi|_W:W\to V$ is an elliptic curve, where we take $X=Z=0$ as the zero-section.
In particular, we obtain an elliptic curve $E/\F_q(V)$.
Conversely, given an elliptic curve $E/\F_q(V)$ we can find an integer $k$ and sections $a_i$ such that the generic fiber of (\ref{HSinPB}) is isomorphic to $E/\F_q(V)$.
The $a_i$ are not unique. Without loss of generality we may assume that there exists no positive integer $j$ and no nonzero element $u\in H^0(\cL^j)$ such that for all $i$ we have that $u^i$ divides $a_i$ in the ring $\oplus_t H^0(\cL^t)$, i.e., we have a so-called minimal Weierstrass equation.

It is well-known that  without loss of generality we may assume the following:
\begin{itemize}
 \item If $p=2$ then $a_2=0$.
\item If $p=3$ then $a_1=a_3=0$.
\item If $p>3$ then $a_1=a_2=a_3=0$.
\end{itemize}
This assumption simplifies the calculation of densities later on.

We show now that if   $E(\overline{\F_q}(V))$ is infinite then $W$ has a Weil divisor that is not $\Q$-Cartier. We start by studying the Cartier divisors on $W$. To this end we consider the \'etale cohomology of $W_{\overline{\F_q}}$. For the rest of this section fix a prime number $\ell\neq p$. All cohomology groups under consideration are \'etale cohomology groups of the appropiate variety over ${\overline{\F_q}}$.

\begin{definition}
 Let $f: X\to Y$ be a proper morphism between quasi-projective varieties. We say that $f$ is a \emph{proper modification} if there is a proper closed subset $Z\subsetneq Y$ such that $f|_{X\setminus f^{-1}(Z)}: X\setminus f^{-1}(Z) \to Y\setminus Z$ is an isomorphism. The \emph{discriminant of $f$} is the minimal closed subset $\Delta \subset Y$ such that $f|_{X\setminus f^{-1}(\Delta)}: X\setminus f^{-1}(\Delta) \to Y\setminus \Delta$ is an isomorphism. We call $E=f^{-1}(\Delta)$ the \emph{exceptional locus of $f$}.
\end{definition}

\begin{proposition}[Gysin sequence]\label{prpGysin} Let $X$ be a projective variety, $Y\subset X$ be a closed subvariety. Then we have an exact sequence of \'etale cohomology groups:
 \[ H^i_c(X\setminus Y,\Z_\ell)\to H^i_c(X,\Z_\ell) \to H^i_c(Y,\Z_\ell) \to H_c^{i+1}(X\setminus Y,\Z_\ell) \dots \]
\end{proposition}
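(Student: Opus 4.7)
The plan is to derive the sequence from the standard short exact sequence of étale sheaves associated to an open–closed decomposition, and to invoke properness of $X$ and $Y$ to identify ordinary étale cohomology with compactly supported cohomology.

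Let $j: U := X\setminus Y \hookrightarrow X$ denote the open immersion and $i: Y \hookrightarrow X$ the closed immersion. For each integer $n\geq 1$, applying $j_!$ and $i_* i^*$ to the constant sheaf $\Z/\ell^n$ on $X$ gives the short exact sequence of étale sheaves
\[
0 \to j_! \Z/\ell^n \to \Z/\ell^n \to i_* \Z/\ell^n \to 0,
\]
whose stalkwise exactness is checked separately at points of $U$ (where $i_*\Z/\ell^n$ vanishes and $j_!\Z/\ell^n \simeq \Z/\ell^n$) and points of $Y$ (where $j_!\Z/\ell^n$ vanishes). Taking étale cohomology of $X$ yields the long exact sequence
\[
\cdots \to H^i(X, j_! \Z/\ell^n) \to H^i(X, \Z/\ell^n) \to H^i(X, i_* \Z/\ell^n) \to H^{i+1}(X, j_!\Z/\ell^n) \to \cdots
\]

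Next I would identify each group with compactly supported cohomology. Since $X$ is projective, hence proper, $H^i(X, \Z/\ell^n) = H^i_c(X, \Z/\ell^n)$. The subvariety $Y$ is closed in a projective variety, hence itself projective, so $H^i(X, i_*\Z/\ell^n) = H^i(Y, \Z/\ell^n) = H^i_c(Y, \Z/\ell^n)$. Finally, by the definition of compactly supported étale cohomology, for any compactification $\bar U \supseteq U$ with open immersion $\jmath: U \hookrightarrow \bar U$, one has $H^i_c(U, \Z/\ell^n) = H^i(\bar U, \jmath_! \Z/\ell^n)$; applied to $\bar U = X$, this gives $H^i(X, j_!\Z/\ell^n) = H^i_c(U, \Z/\ell^n)$. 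Substituting these identifications into the long exact sequence above yields the desired sequence with $\Z/\ell^n$ coefficients.

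The last step is to pass to $\Z_\ell = \varprojlim_n \Z/\ell^n$. Each of the three families of cohomology groups, indexed by $n$, consists of finite groups (étale cohomology of a variety over an algebraically closed field with constructible torsion coefficients), so the inverse system satisfies the Mittag-Leffler condition and $\varprojlim^1$ vanishes. Therefore the inverse limit of the long exact sequences is exact and produces the Gysin sequence with $\Z_\ell$-coefficients.

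No serious obstacle is anticipated: the result is standard and the only technical point is the compatibility of inverse limits with the long exact sequence, which is handled by Mittag-Leffler as above. The one convention worth flagging explicitly in the write-up is that, by ``$H^i_c(X\setminus Y, \Z_\ell)$'' we mean the inverse limit $\varprojlim_n H^i_c(X\setminus Y, \Z/\ell^n)$, computed via a projective compactification (which we may take to be $X$ itself).
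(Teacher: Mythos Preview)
Your proof is correct and follows essentially the same approach as the paper: both use the short exact sequence $0 \to j_!\Z_\ell \to \Z_\ell \to i_*\Z_\ell \to 0$ associated to the open--closed decomposition, and then invoke properness of $X$ and $Y$ to identify ordinary with compactly supported cohomology. The only difference is that you work at finite level with $\Z/\ell^n$ and pass to the limit via Mittag--Leffler, whereas the paper phrases the argument directly with $\Z_\ell$ coefficients; this is a matter of bookkeeping rather than strategy.
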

\begin{proof}
Let $\iota: Y\to X$ and $j:X\setminus Y \to X$ be the inclusion then for each constructable sheaf $\cF$ on $X$ we have the exact sequence
\[ 0 \to j_!j^*\cF \to \cF \to \iota_*\iota^* \cF\to 0.\]
Take now $\cF=\Z_\ell$. Then the cohomology of the first sheaf is by definition the cohomology with compact support of $X\setminus Y$. Since $X$ and $Y$ are proper we obtain the following long exact sequence 
\[H^i_c(X\setminus Y,\Z_\ell) \to H^i_c(X,\Z_\ell) \to H^i_c(Y,\Z_\ell) \to \dots\]
\end{proof}

The existence of the following exact sequence seems to be well-known, but we could not find any reference in the case of \'etale cohomology.

\begin{proposition}\label{prpMV} Let $f:X\to Y$ be a proper birational map between quasi-projective varieties. Let $\Delta \subset Y$ and $E \subset X$ be closed subsets, such that $f$ maps $X\setminus E $ isomorphically to $Y\setminus \Delta$. Then the following sequence is exact
\[ H^i(Y)\to H^i(X)\oplus H^i(\Delta) \to H^i(E) \to H^{i+1}(Y) \to \dots
\]
\end{proposition}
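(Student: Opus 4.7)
The plan is to lift the stated sequence to a single distinguished triangle in the derived category of $\ell$-adic sheaves on $Y$ and then take hypercohomology. First, I would write $j: Y\setminus\Delta \hookrightarrow Y$ and $j': X\setminus E \hookrightarrow X$ for the open inclusions complementary to $\iota_\Delta$ and $\iota_E$. The hypothesis that $f$ restricts to an isomorphism on $X\setminus E$ forces $E = f^{-1}(\Delta)$, so both the closed square relating $E, X, \Delta, Y$ and its open counterpart are Cartesian.

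Next, I would apply $Rf_*$ to the open-closed distinguished triangle on $X$,
\[ j'_!\Z_\ell \to \Z_\ell \to \iota_{E*}\Z_\ell \to j'_!\Z_\ell[1]. \]
Proper base change, applied to the proper morphism $f$, together with $R(f|_{X\setminus E})_*\Z_\ell = \Z_\ell$ (since $f|_{X\setminus E}$ is an isomorphism), then identifies
\[ Rf_*j'_!\Z_\ell = j_!\Z_\ell, \qquad Rf_*\iota_{E*}\Z_\ell = \iota_{\Delta*}R(f|_E)_*\Z_\ell. \]
This yields a distinguished triangle on $Y$,
\[ j_!\Z_\ell \to Rf_*\Z_\ell \to \iota_{\Delta*}R(f|_E)_*\Z_\ell \to j_!\Z_\ell[1]. \]
Comparing with the analogous open-closed triangle $j_!\Z_\ell \to \Z_\ell \to \iota_{\Delta*}\Z_\ell$ on $Y$ itself, the two triangles share their leftmost term and are linked by the unit morphisms $\Z_\ell \to Rf_*\Z_\ell$ and $\iota_{\Delta*}\Z_\ell \to \iota_{\Delta*}R(f|_E)_*\Z_\ell$.

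The octahedral axiom (equivalently, the $3\times 3$-lemma in a triangulated category) then splices these into the Mayer--Vietoris triangle
\[ \Z_\ell \to Rf_*\Z_\ell \oplus \iota_{\Delta*}\Z_\ell \to \iota_{\Delta*}R(f|_E)_*\Z_\ell \to \Z_\ell[1] \]
on $Y$, whose first arrow is built from the two units and whose second is their difference. Taking hypercohomology, together with the Leray identifications $H^i(Y, Rf_*\Z_\ell) = H^i(X)$ and $H^i(Y, \iota_{\Delta*}R(f|_E)_*\Z_\ell) = H^i(E)$, produces the desired long exact sequence. The main point requiring care is the octahedral assembly; a less conceptual alternative is to write down the two Gysin-type long exact sequences furnished by Proposition~\ref{prpGysin}, invoke the isomorphism $H^i_c(X\setminus E) \cong H^i_c(Y\setminus\Delta)$ induced by $f|_{X\setminus E}$, and carry out a direct diagram chase. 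I expect the harder cognitive step to be bookkeeping the base change and octahedron to check that the connecting map in the final triangle is indeed the expected one.
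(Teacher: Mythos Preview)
Your argument is correct, and its core is the same as the paper's: the identification $Rf_*\,j'_!\Z_\ell \cong j_!\Z_\ell$, which in the paper's language becomes the isomorphism $H^i(Y,j_!\Z_\ell)\cong H^i(X,j'_!\Z_\ell)$ obtained from properness of $f$ together with $f|_{X\setminus E}$ being an isomorphism. A minor terminological quibble: what you call ``proper base change'' here is really just the equality $Rf_*\,j'_! = Rf_!\,j'_! = (f j')_! = (j\,f|_{X\setminus E})_! = j_!\,(f|_{X\setminus E})_!$, i.e.\ composition of lower-shriek functors combined with $Rf_*=Rf_!$ for proper $f$; the paper spells out exactly this computation.

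Where you differ from the paper is in the packaging. The paper stays at the level of long exact sequences: it writes the two Gysin sequences for $(Y,\Delta)$ and $(X,E)$ as a ladder, observes that the leftmost vertical map is an isomorphism by the computation above, and then invokes a ``standard diagram chase'' to extract the Mayer--Vietoris sequence. You instead lift everything to distinguished triangles on $Y$ and use the octahedral axiom (equivalently the $3\times 3$ lemma) to assemble the Mayer--Vietoris triangle before taking hypercohomology. Your approach is cleaner conceptually and gives the result at the sheaf level rather than just on global cohomology; the paper's approach is more elementary and avoids any care about compatibility of connecting maps in the octahedron. Amusingly, the ``less conceptual alternative'' you sketch at the end is precisely what the paper does.
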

\begin{proof}
Let  $U:=X\setminus E$ and $V:=Y\setminus \Delta$ and let $\iota:U \to X$ and $j:V\to Y$ be the inclusions. We have the following commutative diagram
\[ \xymatrix{ 
\dots\ar[r]&H^i(Y,j_!\Z_{\ell}) \ar[r]\ar[d] & H^i(Y,\Z_{\ell})\ar[r]\ar[d] & H^i(\Delta,\Z_{\ell}) \ar[r]\ar[d]&\dots\\
\dots\ar[r]&H^i(X,\iota_!\Z_{\ell}) \ar[r] & H^i(X,\Z_{\ell})\ar[r] & H^i(E,\Z_{\ell}) \ar[r]&\dots\\
}\]
where the vertical arrows are induced by $f$. We prove now that $f^*:H^i(Y,j_!\Z_{\ell})\to H^i(X,\iota_!\Z_{\ell})$ is an isomorphism. 
Since $f$ is proper we have by definition that $R^k f_*\iota_! \Z_\ell=R^k(f \iota) _!\Z_\ell$. 
Let $f'=f|_U$ then we have that $(f\iota)_!=(jf')_!=j_!f'_!$. Since $f'$ is an isomorphism, we have $f'_!\Z_\ell=\Z_\ell$. This yields $R^k (f\iota)_! \Z_\ell=R^kj_!f'_!\Z_\ell=R^kj_!\Z_\ell$. The right hand side clearly vanishes for $k>0$.
 Using the Leray spectral sequence we obtain the desired isomorphism
\[H^i(X,\iota_! \Z_{\ell})\cong \oplus_{k+m=i} H^m(Y,R^k f_* \iota_!\Z_{\ell})=H^i(Y,j_! \Z_{\ell}).\]
A standard diagram chase finishes the proof.
\end{proof}

We return to our hypersurface $W$.

\begin{proposition}\label{prpLef} The map $\pi|_W^*: H^2(V) \to H^2(W)$ is injective and has a one-dimensional cokernel. Let $\iota:W\to W$ be the fiberwise elliptic involution. Then $\iota^*:H^2(W)\to H^2(W)$ is the identity map. Similarly, we have that $\pi|_W^*: H^1(V) \to H^1(W)$ is an isomorphism.
\end{proposition}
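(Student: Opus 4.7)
My plan is to deduce the proposition by passing to the ambient smooth projective $\Ps^2$-bundle $P := \Ps(\cE_k)$. The Leray--Hirsch theorem writes $H^*(P,\Q_\ell)$ as the free $H^*(V,\Q_\ell)$-module on $1, h, h^2$, where $h = c_1(\cO_P(1))$, subject to the Chern relation. In particular $\pi^*: H^1(V) \to H^1(P)$ is an isomorphism and $\pi^*: H^2(V) \to H^2(P)$ is injective with one-dimensional cokernel spanned by $h$. The cohomological claims of the proposition therefore reduce to a weak-Lefschetz-type statement: the restriction $H^i(P,\Q_\ell)\to H^i(W,\Q_\ell)$ induced by $W\hookrightarrow P$ is an isomorphism for $i = 1, 2$.

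For this Lefschetz step I would apply the Gysin sequence (Proposition~\ref{prpGysin}) for $W\subset P$ and reduce to showing $H^j_c(P\setminus W,\Q_\ell) = 0$ for $j = 1, 2, 3$. The standard approach via affineness of $P\setminus W$ is not available: the Weierstrass equation is arranged so that $W$ is disjoint from the section $s_X:V\hookrightarrow P$ with image $\{[X:Y:Z] = [0:1:0]\}$, corresponding to the quotient $\cE_k \twoheadrightarrow \cL^{-2k}$ (one checks that $s_X^*\cO_P(W) = \cO_V$), so $P\setminus W$ contains the positive-dimensional projective subvariety $s_X(V)$. Instead I would exploit that the fibers of $\pi|_{P\setminus W}:P\setminus W\to V$ are the smooth affine surfaces $\Ps^2\setminus(\text{cubic curve})$, whose \'etale cohomology is concentrated in degrees $0$ and $2$; by Gysin in the fiber, $R^2(\pi|_{P\setminus W})_*\Q_\ell \cong R^1(\pi|_W)_*\Q_\ell(-1)$ over the smooth locus. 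Combined with Poincar\'e duality on the smooth $(n+1)$-fold $P\setminus W$ and the bound $\dim V = n-1 \geq 2$, the Leray spectral sequence for $\pi|_{P\setminus W}$ should yield the desired vanishing of $H^j_c$ in degrees $j\leq 3$. The hard part will be controlling the top-degree Leray contributions $H^{2n-2}(V, R^2\pi_*\Q_\ell)$ and $H^{2n-3}(V, R^2\pi_*\Q_\ell)$, which via the above fiberwise identification reduce to monodromy invariants in the Tate module of the generic elliptic fiber---the step where the non-constancy forced by minimality and the hypothesis $n\geq 3$ must enter.

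For the elliptic involution, the fiberwise inverse is $\iota:(X,Y,Z)\mapsto (X, -Y - a_1 X - a_3 Z, Z)$, a linear change of the three fiber coordinates. This is well-defined globally since $a_1 X$ and $a_3 Z$ both lie in $H^0(\cO_P(1)\otimes\pi^*\cL^{3k})$, matching the type of $Y$. Hence $\iota$ extends to an automorphism of $P$ which commutes with $\pi$ and preserves $\cO_P(1)$ up to isomorphism. Therefore $\iota^*$ fixes both $\pi^* H^*(V)$ and the class $h$; by the Leray--Hirsch presentation, $\iota^* = \mathrm{id}$ on $H^*(P,\Q_\ell)$, and the Lefschetz isomorphism above transports this identity to $H^*(W,\Q_\ell)$.
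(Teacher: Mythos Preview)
Your overall architecture matches the paper: both arguments reduce to showing that restriction $H^i(P,\Q_\ell)\to H^i(W,\Q_\ell)$ is an isomorphism for $i=1,2$, both use the Gysin sequence together with Poincar\'e duality on the smooth $(n{+}1)$-fold $P\setminus W$, so that the task becomes $H^{2n-1}(P\setminus W)=H^{2n}(P\setminus W)=H^{2n+1}(P\setminus W)=0$, and your treatment of the involution is essentially the paper's (the paper uses the class of the zero section rather than $h=c_1(\cO_P(1))$ as the extra generator). One small slip: the section of $\pi$ disjoint from $W$ is $[X{:}Y{:}Z]=[1{:}0{:}0]$, not $[0{:}1{:}0]$; the latter is the zero section and lies on $W$. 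Your computation $s^*\cO_P(W)=\cO_V$ is correct for $[1{:}0{:}0]$.

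Where you diverge from the paper is precisely the vanishing step, and here there is a genuine gap. You propose the Leray spectral sequence for $\pi|_{P\setminus W}$ and correctly isolate the dangerous terms $H^{2n-2}(V,R^2\pi_*\Q_\ell)$ and $H^{2n-3}(V,R^2\pi_*\Q_\ell)$, but you do not prove they vanish; you say only that this ``should'' follow and that minimality ``must enter''. In fact minimality is \emph{not} a hypothesis of Proposition~\ref{prpLef}, and the paper's proof does not use it---so an argument that genuinely needs it would prove less than required. The paper avoids the monodromy analysis altogether. It embeds $V\hookrightarrow\Ps^m$ via $|\cL|$ and passes to the cone $\Ps_0\subset\Ps(2t,3t,1^{m+1})$; the natural map $\Ps(\cE)\to\Ps_0$ contracts the divisor $\{Z=0\}$ (which contains the offending section $[1{:}0{:}0]\cong V$). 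The image $W_0\subset\Ps_0$ is now cut out by a section of an ample line bundle, so $U:=\Ps_0\setminus W_0$ is \emph{affine} of dimension $n+1$; Artin vanishing gives $H^j(U)=0$ for $j>n+1$, hence for $j=2n{-}1,2n,2n{+}1$ once $n\geq 3$. Finally the modification long exact sequence of Proposition~\ref{prpMV}, applied to $\Ps(\cE)\setminus W\to U$ with discriminant $\cong\A^1$ and exceptional locus a line bundle over $V$, transports this vanishing to $\Ps(\cE)\setminus W$. That contraction-to-make-the-complement-affine, together with Proposition~\ref{prpMV}, is the idea your proposal is missing.
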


\begin{proof}
Since $\pi|_W\circ \sigma$ is the identity, it follows that $\sigma^*\circ \pi|_W^*$ is an isomorphism, hence $\pi|_W^*$ is injective.

Use the linear system $|\cL|$ to embed $V$ into some $\Ps^m$. Consider the weighted projective space $\Ps(2t,3t,1^{m+1})$, where $t=k\deg(\cL)$. Let $x,y,z_0,\dots,z_m$ be coordinates on $\Ps(2t,3t,1^{m+1})$.
Consider the projection map $\psi: \Ps(2t,3t,1^{m+1})\dashrightarrow \Ps^m$. Let $\Ps_0:=\overline{\psi^{-1}(V)}$. Let $W_0\subset \Ps_0$ be the hypersurface defined by
\[ y^2+a_1xy+a_3y=x^3+a_2x^2+a_4x+a_6.\]
There is a natural map $f: \Ps(\cE) \to \Ps_0$. This map is a proper modification, i.e., the divisor $\{Z=0\}$ is mapped to the rational curve $\{z_0=z_1=\dots=z_m=0\}$, and $f$ is an isomorphism on the complements.
The restriction $f|_W$ maps the image $\Sigma$ of the zero-section to the point $p=(1:1:0:0:\dots:0)$ and yields an isomorphism $W\setminus \Sigma \to W_0\setminus \{p\}$. 

Now $\Ps_0\setminus W_0$ is the intersection of $\Ps_0$ with the complement of a hypersurface in $\Ps(2t,3t,1^{m+1})$, hence $U:=\Ps_0\setminus W_0$ is affine. Since $n+1:=\dim U>3$ it follows that $H^{2n-i}(U)=0$, for $i\leq 1$, and therefore we have that $H^{2n}(W_0)\cong H^{2n}(\Ps_0)$.

The map $\psi:\Ps(\cE)\setminus W \to U$ is  a proper modification.  The center of this modification is a curve isomorphic to $\A^1$ and the exceptional divisor is a line bundle over $V$, hence the cohomology of the exceptional divisor $E$ is isomorphic to the cohomology of $V$.

From Proposition~\ref{prpMV} we obtain exact sequences for $i=-1,0,1$
\[ 0=H^{2n-i}(U)\to H^{2n-i}(\Ps(\cE)\setminus W) \to H^{2n-i}(V) =0,\]
where we use that $\dim V=\dim U-2=n-1$.

Hence $H^{2n-i}(\Ps(\cE)\setminus W)=0$. Since $\Ps(\cE)\setminus W$ is smooth we obtain by Poincar\'e duality that $H^i_c(\Ps(\cE)\setminus W)=0$ for $i=1,2,3$. From the Gysin exact sequence (Proposition~\ref{prpGysin}) it follows that $H^i_c(\Ps(\cE))\cong H^i_c(W)$  for $i=1,2$.

The cohomology of projective bundles is well-known, in particular, we have $H^1_c(\Ps(\cE)) \cong \pi^*H^1_c(V)$. This yields the statement concerning $H^1$. 

For the statement concerning $H^2$ note that $H^2_c(\Ps(\cE)) \cong \pi^*H^2_c(V) \oplus \Q_\ell$, where the second summand can be generated by the first chern class of the divisor $\{Z=0\}$. This implies that $H^2_c(W)\cong \pi|_W^*H^2_c(V) \oplus \Q_\ell$, where the second summand is the first chern class of the zero-section $\{X=Z=0\}$.

On $\Ps(\cE)$ one has a fiberwise involution $\iota:[X:Y:Z]\mapsto[X:-Y:Z]$. Restricted to $W$ this is just the elliptic involution on the fiber. Now $\iota^*$ leaves $H^2_c(V)$ invariant and fixes the zero-section, hence it leaves $H^2_c(W)=H^2(W)$ invariant.
\end{proof}

We want to define a cycle class map for cycles on $W$. The usual definition of the cycle class map  for cycles on a variety $X$ (cf. \cite[Section 23]{LEC}) assumes that $X$ is smooth. We will give a different, but equivalent definition of the cycle class map that works also on singular varieties and mimics the cycle class map of Borel-Moore homology.

For a variety $X$ denote with $C^r(X)$ the free abelian group generated by irreducible codimenion $r$ subvarieties, denote with $CH^r(X)$ the quotient of $C^r(X)$ modulo rational equivalence.

Suppose for the moment that $X$ is a smooth projective variety, and $Z$ is an irreducible subvariety of codimension $r$. There is a natural isomorphism $H^0(Z_{\sm},\Q_\ell)\cong \Q_\ell$.
One  defines $c_r(Z)$ the class of $Z$ as the image of $1$ under the composition
\[ \Q_\ell\to H^0(Z_{\sm},\Q_\ell)\to H^{2r}_{Z_{\sm}}(X\setminus Z_{\sing},\Q_\ell)=H^{2r}_Z(X,\Q_\ell)\to H^{2r}(X,\Q_\ell).\]
In order to generalize to the case where $X$ might be singular, we modify this map by composing it with Poincar\'e duality. Extending the composed map linearly map yields a linear map
\[ c_r^*: C^r(X) \to H^{2n-2r}_c(X,\Q_\ell)^*.\]
There is an alternative definition of this map, using that Poincar\'e duality is functorial. For an irreducible subvariety $Z$ of $X$ consider the composition
\[ H^0(Z_{\sm},\Q_\ell)\to H^{2n-2r}_c(Z_{\sm},\Q_\ell)^* \to H^{2n-2r}_c(Z,\Q_\ell)^*\to H^{2n-2r}_c(X,\Q_\ell)^*.\]
The first map is Poincar\'e duality. Since $\dim Z_{\sing}\leq n-r-1$ it follows from the Gysin sequence (\ref{prpGysin}) that $H^{2n-2r}_c(Z_{\sm},\Q_\ell) \cong H^{2n-2r}_c(Z,\Q_\ell)$ this defines the second map, the third map is the dual of the map from Proposition~\ref{prpGysin}.
In this alternative definition of $c_r^*$ we did not use that $X$ was smooth. 

Let us return to our hypersurface $W$. For a prime divisor $D$ on $W$ define the class of $D$ as the image of $1$ under
\[ H^0(D_{\sm},\Q_\ell)\to H^{2n-2}_c(D_{\sm},\Q_\ell)^* \to H^{2n-2}_c(D,\Q_\ell)^*\to H^{2n-2}_c(W,\Q_\ell)^*.\]
Extending this map linearly  defines a linear map
\[ c_1^*: \Div(W) \to H^{2n-2}_c(W,\Q_\ell)^*\]
Since $W$ is smooth in codimension 1, the Gysin sequence for $(W,W_{\sing})$ yields an isomorphism $H^{2n-2}_c(W_{\sm},\Q_\ell)\to H^{2n-2}_c(W,\Q_\ell)$. In particular, $c_1^*$ of a divisor of function vanishes.

A point $P\in E(\overline{\F_q}(V))$ can be considered as a rational section $\sigma_P:V\to W$ of $\pi|_W$. Denote with $\Sigma_P$ the closure of the image of $\sigma_p$. Let $W_\eta$ be the generic fiber of $\pi:W\to V$. There is a natural map $E(\overline{\F_q}(V))\to CH^1(W_\eta)$ by sending $P$ to $\Sigma_P-\Sigma_O$. From \cite{Waz} it follows that there is a natural section $CH^1(W_\eta)\to CH^1(W)$ to the restriction map $CH^1(W_\eta)\to CH^1(W)$. This defines a map $E(\overline{\F_q}(V))\to CH^1(W)$  sending $P \to \Sigma_P-\Sigma_O+D_P$, where $D_P$ is some divisor such that $\overline{\pi(D_P)}\neq V$. (For the details how to find $D_P$ see \cite{Waz}.)
Since all fibers are irreducible, we have that $D_P$ is the pullback of a divisor from $V$.

Let $H^{2n-2}(W,\Q_{\ell})^{*-}$ be the $-1$-eigenspace of $\iota^*$.
There is a natural map from $H^{2n-2}_c(W,\Q_{\ell})^* \to H^{2n-2}_c(W,\Q_{\ell})^{*-}$  mapping $a$ to $a-\iota^*(a)$. (Note that the class of $D_P$ is mapped to zero under this map.) Composing $c_1^*$ with this map, we obtain a natural homomorphism of groups $E(\overline{\F_q}(V)) \to H^{2n-2}_c(W,\Q_{\ell})^{*-}$.

\begin{lemma}  The natural map 
\[ E(\overline{\F_q}(V)) \otimes \Q_\ell  \to H^{2n-2}(W,\Q_\ell)^{-} \]
is injective.
\end{lemma}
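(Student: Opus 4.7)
The plan is to show that for every non-torsion $P\in E(\overline{\F_q}(V))$ the class $c_1^*(\Sigma_P-\Sigma_O+D_P)-\iota^*c_1^*(\Sigma_P-\Sigma_O+D_P)$ is nonzero in $H^{2n-2}_c(W,\Q_\ell)^{*-}$. As a first step I would simplify this class: the fibrewise involution $\iota$ fixes the zero section pointwise, so $\iota^*\Sigma_O=\Sigma_O$; since the fibres of $\pi|_W$ are irreducible the correction $D_P$ is a pullback from $V$, which $\iota$ leaves invariant, so $\iota^*D_P=D_P$; and $\iota(\Sigma_P)=\Sigma_{-P}$ by the very definition of $\iota$ as fibrewise inversion. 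Hence the image of $P$ is $c_1^*(\Sigma_P-\Sigma_{-P})$, and the problem reduces to showing this class is nonzero for non-torsion $P$.

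The main step is a reduction to the case of an elliptic surface. Fix a very ample line bundle $\cM$ on $V$ and, for $d$ large, choose a smooth complete intersection curve $C=H_1\cap\cdots\cap H_{n-2}\subset V$ with general $H_i\in|\cM^d|$. The preimage $W_C:=\pi|_W^{-1}(C)$ is smooth in codimension $1$ for generic $C$, because the singular locus of $W$ sits over the codimension-one discriminant $\{\Delta=0\}\subset V$, which $C$ meets transversally in a finite set of points; thus $\pi|_{W_C}\colon W_C\to C$ is an elliptic fibration over a smooth curve. Dualising the Gysin map $H^2(W_C,\Q_\ell)\to H^{2n-2}_c(W,\Q_\ell)$ attached to $i\colon W_C\hookrightarrow W$ gives a restriction $i^*\colon H^{2n-2}_c(W,\Q_\ell)^*\to H^2(W_C,\Q_\ell)^*$, and a naturality check (compatibility of $c_1^*$ with proper intersection) yields
\[ i^*\bigl(c_1^*(\Sigma_P-\Sigma_{-P})\bigr)=c_1^*\bigl(\Sigma_{P|_C}-\Sigma_{-P|_C}\bigr)\in H^2(W_C,\Q_\ell)^{*-}. \]

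On a minimal smooth model of the elliptic surface $W_C\to C$ the classical Shioda--Tate theory, together with the injectivity of the cycle class map $\NS(-)\otimes\Q_\ell\hookrightarrow H^2(-,\Q_\ell)$ for smooth projective surfaces, shows that the right hand side is nonzero whenever $P|_C$ has infinite order in $E(\overline{\F_q}(C))$. To guarantee such a $C$ exists for every non-torsion $P$, I would apply a Silverman-type specialisation theorem: for each $m\geq 1$ the locus of $C$ along which $mP$ vanishes is a proper closed subset of the parameter space (otherwise $mP$ would vanish on a dense family of curves, hence identically, contradicting the non-torsion hypothesis), so a generic $C$ specialises $P$ to a point of infinite order.

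The hardest part is the naturality statement $i^*\circ c_1^*=c_1^*\circ(\,\cdot\,\cap W_C)$ for the singular varieties $W$ and $W_C$, since $c_1^*$ is defined via the smooth locus and the Gysin sequence of Proposition~\ref{prpGysin}. The essential verifications are that $\Sigma_P$ and $\Sigma_{-P}$ meet $W_{\sm}$ and the smooth locus of $W_C$ properly for generic $C$, that the singular locus of $W$ has codimension at least two (which is built into the minimality assumption on the Weierstrass equation, since minimal Weierstrass hypersurfaces are non-singular in codimension~$1$), and that the Gysin sequences used to define $c_1^*$ on $W$ and on $W_C$ are compatible with the restriction map; once these are in place, the compatibility reduces to the classical case of smooth projective varieties.
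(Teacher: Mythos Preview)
Your strategy coincides with the paper's: restrict to a sufficiently general curve $C\subset V$, use injectivity of specialisation on Mordell--Weil groups, and then invoke Shioda's theory on the resulting elliptic surface. Two technical points that you flag or gloss over are handled more explicitly in the paper. First, rather than positing a Gysin map $H^2(W_C,\Q_\ell)\to H^{2n-2}_c(W,\Q_\ell)$ for the singular $W$ and then worrying about naturality of $c_1^*$, the paper builds the restriction $H^{2n-2}_c(W)^*\to H^2_c(W_C)^*$ by passing to smooth loci: since $W$ is smooth in codimension one and a general $W_C$ has only isolated $ADE$ singularities, one has $H^{2n-2}_c(W_{\sm})\cong H^{2n-2}_c(W)$ and $H^2_c((W_C)_{\sm})\cong H^2_c(W_C)$, so Poincar\'e duality on $W_{\sm}$ and $(W_C)_{\sm}$ together with ordinary pullback $H^2(W_{\sm})\to H^2((W_C)_{\sm})$ does the job and commutes with $c_1^*$ by construction. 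Second, Shioda--Tate lives on the minimal resolution $\tilde W_C$, not on $W_C$ itself; the paper bridges this gap by noting that the narrow Mordell--Weil lattice is orthogonal to the exceptional fibre components of $\varphi\colon\tilde W_C\to W_C$ and hence lands inside $\varphi^*H^2(W_C,\Q_\ell)^-$. Your preliminary simplification of the image of $P$ to $c_1^*(\Sigma_P-\Sigma_{-P})$ is correct but not needed in the paper's argument, which works directly with the full homomorphism and one curve $C$ chosen so that specialisation is injective on the whole group.
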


\begin{proof} 
Note that we work over $\overline{\F_q}$. In particular,  we can find a curve $C$ that satisfies any  property that holds for a  general $C\subset V$.
Let $C$ be a curve in $V$. If $C$ is chosen sufficiently general, then  the general fiber of $\pi$ restricted to $W_C:=\pi^{-1}(C)$ is an elliptic curve. Then $W_C\to C$ defines an elliptic curve $E_C/\F_q(C)$.
Since $W$ is smooth in codimension 1 we have a natural isomorphism $H^{2n-2}_c(W_{\sm})^*\to H^{2n-2}_c(W)^*$. Since the general fiber of $W_C\to C$ is smooth, we have that $W_C$ has only isolated singularities, and hence $H^{2}_c((W_C)_{\sm})^*\cong H^2_c(W_C)^*$. Composing these isomorphisms with
\[  H^{2n-2}_c(W_{\sm})^*\stackrel{pd}{\to} H^2(W_{\sm})\to H^2((W_C)_{\sm})\stackrel{pd}{\to} H^2_c((W_C)_{\sm})^*\]
yields a map $ H_c^{2n-2}(W)^* \to H^2_c(W_C)^*$,
which commutes with $c_1^*$. Hence it makes sense to  consider the following diagram
\[ \xymatrix{ E(\overline{\F_q}(V))\otimes \Q \ar[r]\ar[d] & H^{2n-2}_c(W,\Q_{\ell})^* \ar[d] \\
E_C(\overline{\F_q}(C))\otimes \Q \ar[r] & H^2_c(W_C,\Q_{\ell}). }
\]
If $C$ is chosen sufficiently general then the specialization map \[E(\overline{\F_q}(V)) \to E_C(\overline{\F_q}(C))\] is injective. Since $W$ is minimal it follows that for a general $C$ the surface $W_C$ has at most isolated $ADE$ singularities. 

Let $\varphi: \tilde{W_C}\to W_C$ be a minimal resolution of singularities. It is well-known that $\tilde{W_C}\to W_C \to C$ is an elliptic surface. The exceptional divisors of $\varphi$ are precisely the fiber components not intersecting the zero-section.
Shioda \cite{ShiMW} defined a  pairing on the Mordell-Weil group of $\tilde{\pi}|_C:\tilde{W_C}\to C$. This pairing is induced from the intersection pairing, but takes values in $\Z[1/N]$ for some integer $N$. However, if we restrict the pairing to  the subgroup of sections intersecting the same fiber-component of each fiber as the identity component then the pairing takes values in $\Z$. This latter subgroup has finite index in $\MW(\tilde{\pi|_C})$, and is called the narrow Mordell-Weil lattice.
The narrow Mordell-Weil lattice  injects into $H^2(\tilde{W}_C,\Q_{\ell})$ and the image is orthogonal to the fiber components not intersecting the zero-section (cf.  \cite{ShiMW}). From this it follows that the image of the narrow Mordell-Weil lattice is contained in $\varphi^*(H^2(W_C,\Q_{\ell}))$. Since the image of the narrow Mordell-Weil lattice is orthogonal to the zero-section and the class of a general fiber, it follows that the narrow Mordell-Weil lattice is contained in 
$\varphi^*(H^2(W_C,\Q_{\ell})^{-})$. Hence
\[ E_C(\overline{\F_q}(C)) \otimes \Q \hookrightarrow H^2(W_C,\Q_{\ell}) ^{-}
\]
is injective and therefore
\[ E(\overline{\F_q}(V))\otimes \Q_\ell \to H^{2n-2}(W,\Q_\ell) \]
is injective. Since $H^{2n-2}(W,\Q_\ell) \to H^2(W_C,\Q_\ell)$ is $\iota^*$-equivariant it follows that
\[ E(\overline{\F_q}(V)) \otimes \Q_\ell \to H^{2n-2}(W,\Q_\ell) \to H^{2n-2}(W,\Q_\ell)^{-} \]
is injective.
\end{proof}

With $\Pic(W)$ we denote the group of Cartier divisors on $W$ modulo linear equivalence. There is a natural map from $\Pic(W)\to CH^1(W)$ the group of codimension 1 cycles in $W$ modulo rational equivalence. 
\begin{proposition}
Let $N(W)\subset CH^1(W)$ be the subgroup generated by the divisors on $W$ that are homologically trivial. Let $P(W)\subset CH^1(W)$ be the subgroup generated by $N(W)$ and $\Pic(W)$. Then there is an injection
\[ E(\overline{\F_q}(V)) \otimes \Q \to  (CH^1(W)/P(W))\otimes \Q. \]
\end{proposition}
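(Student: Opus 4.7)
The plan is to extend the injection of the preceding Lemma through the quotient $CH^1(W)/P(W)$, using the anti-invariant cycle class map
\[ \tilde c_1^* \colon \Div(W) \longrightarrow H^{2n-2}_c(W, \Q_\ell)^{*-}, \qquad D \longmapsto c_1^*(D) - \iota^* c_1^*(D). \]
As noted earlier in the section, $c_1^*$ vanishes on principal divisors (via the Gysin sequence, using that $W$ is smooth in codimension one), so $\tilde c_1^*$ descends to $CH^1(W)$. The proposition reduces to showing that $\tilde c_1^*$ vanishes on $P(W)$, for then it factors through $(CH^1(W)/P(W)) \otimes \Q_\ell$, and its composition with the map $E(\overline{\F_q}(V)) \otimes \Q \to (CH^1(W)/P(W)) \otimes \Q$ (sending $P$ to the class of $\Sigma_P - \Sigma_O + D_P$) recovers, after extending scalars to $\Q_\ell$, the injection of the preceding Lemma: the terms $\Sigma_O$ and $D_P$ are $\iota$-invariant (the zero section is pointwise fixed, and $D_P$ is pulled back from $V$), so they are killed by anti-symmetrization.

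Vanishing of $\tilde c_1^*$ on $N(W)$ is tautological. For $\Pic(W)$, the argument invokes Proposition~\ref{prpLef}. For a Cartier divisor $D$ one has the first Chern class $c_1(\cO(D)) \in H^2(W, \Q_\ell)$, and the Weil cycle class $c_1^*(D) \in H^{2n-2}_c(W, \Q_\ell)^*$ coincides with its Poincar\'e dual on $W_{\sm}$, pushed into $H^{2n-2}_c(W, \Q_\ell)^*$ via the Gysin-sequence isomorphism $H^{2n-2}_c(W_{\sm}, \Q_\ell) \xrightarrow{\sim} H^{2n-2}_c(W, \Q_\ell)$, which is available because $\codim_W W_{\sing} \geq 2$. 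Since $\iota^*$ acts as the identity on $H^2(W)$ by Proposition~\ref{prpLef}, and Poincar\'e duality is $\iota$-equivariant, $c_1^*(D)$ is $\iota^*$-invariant, so $\tilde c_1^*(D) = 0$.

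The main obstacle is verifying the compatibility used in the second paragraph between the two incarnations of the cycle class on a Cartier divisor (the Chern-class definition valued in $H^2$ and the Weil-divisor construction valued in $H^{2n-2}_c(\cdot)^*$) on the \emph{singular} variety $W$. On smooth varieties this is standard; here it rests on the codimension-two bound on $W_{\sing}$ to identify $H^{2n-2}_c(W_{\sm})$ with $H^{2n-2}_c(W)$ and to apply Poincar\'e duality on $W_{\sm}$. Once this identification is in place, the translation of Proposition~\ref{prpLef} into the statement that $c_1^*(\Pic(W))$ lies in the $\iota^*$-fixed part of $H^{2n-2}_c(W, \Q_\ell)^*$ is formal, and the injection of the proposition follows.
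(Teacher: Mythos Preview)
Your proof is correct and the overall strategy matches the paper's exactly: factor the injection of the preceding Lemma through $CH^1(W)/P(W)$ by showing that the composition $CH^1(W)\to H^{2n-2}_c(W,\Q_\ell)^*\to H^{2n-2}_c(W,\Q_\ell)^{*-}$ kills both $N(W)$ (tautologically) and $\Pic(W)$. The one tactical difference is in how $\iota^*$-invariance of $c_1^*(\Pic(W))$ is established. The paper shows that $\iota$ acts trivially on $\Pic(W)\otimes\Q$ itself, using the Kummer sequence together with \emph{both} parts of Proposition~\ref{prpLef}: the isomorphism $H^1(W)\cong H^1(V)$ forces $\ker\bigl(c_1:\Pic(W)\to H^2(W,\Q_\ell)\bigr)\subset \pi^*\Pic(V)$, on which $\iota$ is trivial, while the $H^2$ statement handles the image; equivariance of $c_1^*$ then gives the conclusion. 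You instead identify $c_1^*(D)$ for Cartier $D$ with the Poincar\'e dual on $W_{\sm}$ of the restriction of $c_1(\cO(D))\in H^2(W,\Q_\ell)$, which is $\iota^*$-fixed. Your route is slightly more direct and uses only the $H^2$ part of Proposition~\ref{prpLef}, but it rests on the compatibility of the two cycle-class constructions that you correctly flag as the main point; the paper's route trades that compatibility check for the $H^1$ input and thereby avoids comparing the two cycle classes on the singular variety $W$.
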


\begin{proof}
There is a natural isomorphism $\Pic(W)\cong H^1(W,G_m)$. Using the Kummer sequence we get a map $c_1: \Pic(W) \to H^2(W,\Q_{\ell})$. 
Since $H^1(W,\Q_{\ell})\cong H^1(V,\Q_{\ell})$ (by Proposition~\ref{prpLef}) it follows from the Kummer sequence that the kernel of $c_1$ is contained in $\pi^*\Pic(V)$. From Proposition~\ref{prpLef} it follows that  $\iota$ acts trivialy on $H^2(W,\Q_{\ell})$ hence it acts trivially on  $\Pic(W)/\pi^*\Pic(V)$. Since $\iota$ acts also trivially on $\pi^*\Pic(V)$ it has to  act trivially on $\Pic(W)\otimes \Q$.

Consider now the map $c_1^*:CH^1(W)\to H^{2n-2}_c(W,\Q_{\ell})^* \to H^{2n-2}_c(W,\Q_{\ell})^{*-}$. The above discussion shows that $c_1^*(\Pic(W))$ is contained in the kernel of the second map. Since $N(W)$ is the kernel of the first map, we obtain that $\Pic(W)+N(W)=P(W)$ lies in the kernel of the composition. Hence we can factor the map $E(\overline{F_q}(V))\otimes \Q \to H^{2n-2}_c(W,\Q_{\ell})^{*-}$ as 
\[ E(\overline{F_q}(V))\otimes \Q \to CH^1(W)/P(W) \otimes \Q \to H^{2n-2}_c(W,\Q_{\ell})^{*-}.\]
From the previous lemma it follows that the composition is injective. In particular, the first map is injective.
\end{proof}

\begin{corollary} \label{corQCar}
Suppose $\rank \MW(\pi)\geq 1$. Then $W$ contains a Weil divisor that is not $\Q$-Cartier.
\end{corollary}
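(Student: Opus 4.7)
My plan is to deduce the statement directly by contradiction from the injection established in the preceding proposition. The entire substance of the geometric argument is already packaged into that injection, so what remains is a short formal manipulation of $\Pic(W)$ and $CH^1(W)$.

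First, I would assume for contradiction that every prime Weil divisor on $W$ is $\Q$-Cartier. Since $CH^1(W)$ is generated by classes of prime divisors, this assumption means that for every $[D]\in CH^1(W)$ some positive multiple $n[D]$ lies in the image of the natural map $\Pic(W) \to CH^1(W)$. Tensoring with $\Q$, the image of $\Pic(W)\otimes \Q$ then fills out all of $CH^1(W)\otimes \Q$, so in particular $CH^1(W)\otimes \Q$ coincides with $(\Pic(W)+N(W))\otimes \Q = P(W)\otimes\Q$.

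Consequently $(CH^1(W)/P(W))\otimes \Q = 0$. On the other hand, the preceding proposition gives an injection
\[ E(\overline{\F_q}(V)) \otimes \Q \hookrightarrow (CH^1(W)/P(W))\otimes \Q, \]
and the hypothesis $\rank \MW(\pi)\geq 1$ makes the left-hand side nonzero. This contradiction forces the existence of at least one prime Weil divisor on $W$ whose class is not $\Q$-Cartier.

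I do not expect a real obstacle here, since all the work has been done upstream: the hard content is the Shioda-style pairing argument on a general curve slice $W_C$ together with the $\iota^*$-equivariance used in the preceding lemma and proposition. The only item that warrants a careful sentence is the equivalence between \emph{every Weil divisor on $W$ is $\Q$-Cartier} and \emph{$\Pic(W)\otimes\Q \to CH^1(W)\otimes\Q$ is surjective}, but this is immediate from writing a Weil divisor as an integer combination of primes and clearing denominators.
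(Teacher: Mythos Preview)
Your proof is correct and is exactly the intended deduction: the paper states this corollary without proof because it follows immediately from the preceding proposition by the contrapositive argument you give. The only thing to note is that the paper's statement speaks of ``a Weil divisor'' rather than ``a prime Weil divisor'', but since the $\Q$-Cartier divisors form a subgroup of $\Div(W)$ the two formulations are equivalent, as you observe.
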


\begin{corollary} Suppose $W$ is smooth in codimension 3. Then $\rank E(\overline{\F_q}(V))=0$.
\end{corollary}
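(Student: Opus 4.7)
The plan is to derive this corollary as the contrapositive of Corollary~\ref{corQCar}, combined with the parafactoriality theorem of Grothendieck from SGA2 already invoked in the introduction.

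First I would argue by contradiction: suppose $\rank E(\overline{\F_q}(V)) \geq 1$. Then Corollary~\ref{corQCar} produces a Weil divisor $D$ on $W$ which is not $\Q$-Cartier. The geometric setup gives the remaining hypothesis needed to apply SGA2: because $W$ is cut out by the single Weierstrass equation \eqref{HSinPB} inside the smooth projective bundle $\Ps(\cE_k) \to V$, it is a local complete intersection, so all its singularities are hypersurface singularities; and the hypothesis that $W$ is smooth in codimension $3$ is exactly the statement that $\codim_W \Sing(W) \geq 4$.

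Next I would invoke \cite[Th\'eor\`eme XI.3.13]{SGA2}: a local complete intersection whose singular locus has codimension $\geq 4$ is parafactorial at every point, meaning that every Weil divisor is locally principal, hence Cartier. Applied globally to $W$, this forces $\Pic(W) \otimes \Q \to CH^1(W) \otimes \Q$ to be surjective, so every Weil divisor on $W$ is $\Q$-Cartier. This contradicts the existence of $D$, completing the proof.

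The conceptual obstacle has already been resolved in Corollary~\ref{corQCar}, where the elliptic structure was translated into a statement about Weil divisors; the present step is merely the purely commutative-algebra fact that hypersurface singularities in sufficiently high codimension carry no obstruction to divisors being Cartier. Thus I anticipate no further difficulty beyond a careful statement of the parafactoriality input, which the introduction has already signposted.
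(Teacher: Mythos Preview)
Your proposal is correct and follows exactly the paper's approach: combine Corollary~\ref{corQCar} with \cite[Th\'eor\`eme XI.3.13]{SGA2} to conclude that a hypersurface smooth in codimension~3 is $\Q$-factorial, yielding a contradiction. The paper's own proof is the same argument compressed into one sentence; your additional remarks (that $W$ is a local complete intersection, and the explicit translation of the codimension hypothesis) are precisely the points one would unpack if asked to justify that single sentence.
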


\begin{proof} From \cite[Th\'eor\`eme XI.3.13]{SGA2} it follows that a hypersurface that is smooth in codimension 3 is $\Q$-factorial, i.e., every Weil divisor is a $\Q$-Cartier divisor.
\end{proof}

We have the following direct consequence.
\begin{corollary}\label{corRank} Suppose $\dim V>2$ and that  $W$ is either smooth or has isolated singularities. Then the rank of $E(\F_q(V))$ equals 0.
\end{corollary}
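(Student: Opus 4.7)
The plan is to observe that this is essentially a bookkeeping consequence of the immediately preceding corollary. The main task is to check that its hypothesis is met under our assumptions, and then deal with the passage from $\overline{\F_q}(V)$-points to $\F_q(V)$-points.

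First I would verify that $W$ is smooth in codimension $3$. Since $\dim V > 2$, the ambient projective bundle $\Ps(\cE_k)$ has dimension at least $5$, so the hypersurface $W$ has dimension $\dim W = \dim V + 1 \geq 4$. If $W$ is smooth, there is nothing to check. If $W$ has only isolated singularities, then $\dim W_{\sing} \leq 0$, so $\codim_W W_{\sing} \geq 4 > 3$, and in particular $W$ is smooth in codimension $3$.

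Second, I would invoke the preceding corollary (the one deduced from \cite[Th\'eor\`eme XI.3.13]{SGA2}) to conclude $\rank E(\overline{\F_q}(V)) = 0$. Finally, since the natural map $E(\F_q(V)) \hookrightarrow E(\overline{\F_q}(V))$ is injective, tensoring with $\Q$ shows $\rank E(\F_q(V)) \leq \rank E(\overline{\F_q}(V)) = 0$, which forces $\rank E(\F_q(V)) = 0$.

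There is no real obstacle here; the only conceptual point is making sure that "isolated singularities" together with $\dim V > 2$ really forces codimension of the singular locus to exceed $3$, which is immediate from a dimension count. The corollary is a packaging of the preceding result for the two most common geometric situations one encounters in the density calculations of Section~\ref{sectDen}.
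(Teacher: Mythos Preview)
Your proof is correct and matches the paper's intended argument: the paper presents this corollary as a ``direct consequence'' of the preceding one without further proof, and your dimension count together with the injection $E(\F_q(V)) \hookrightarrow E(\overline{\F_q}(V))$ is exactly the reasoning being left implicit.
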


\section{Densities}\label{sectDen}
In this section we work over a finite field $\F_q$.
 Let $k>0$ be an integer. Define
\[P_k:=\left\{ \begin{array}{cl}   
H^0(\cL^k) \times \{0\} \times H^0(\cL^{3k}) \times H^0(\cL^{4k}) \times H^0(\cL^{6k}) & \mbox{if } p=2,\\
\{0\} \times H^0(\cL^{2k}) \times \{0\} \times H^0(\cL^{4k}) \times H^0(\cL^{6k}) &\mbox{if }  p=3,\\
 \{0\} \times  \{0\} \times \{0\} \times H^0(\cL^{4k}) \times H^0(\cL^{6k}) &\mbox{if } p>3.
          \end{array}\right.
.\]
With a point
$\mathbf{a}:=(a_1,a_2,a_3,a_4,a_6)\in P_k$
we associate the hypersurface
\[ W_{\mathbf{a}} = V(-Y^2Z-a_1XYZ-a_3YZ^2+ X^3+a_2X^2Z+a_4XZ^2+a_6Z^3 ) \subset \Ps(\cE_k) \]
which has a fibration $\pi_{\mathbf{a}}:W_\mathbf{a} \to V$ in cubic curves with a marked point. 
Let 
\[ M_k=\left\{ \mathbf{a}\in P_k \mid\mbox{ there exists a } P\in V \mbox{ such that } \pi_{\mathbf{a}}^{-1}(P)  \mbox{ is smooth} \right\}.\]
For $\mathbf{a}\in M_k$ denote with $E_{\mathbf{a}}$ the corresponding elliptic curve over $\F_q(V)$.
Within $M_k$ we have the subsets
\[ U_k^{({i})}=\{ \mathbf{a} \in M_k \mid W_{\mathbf{a}} \mbox{ is smooth in codimension } i\}.
\]
So $U_k^{(n)}$ is the complement of the discriminant.
Define
\[ R_k^{(r)}= \{ \mathbf{a} \in M_k \mid  \rank E_{\mathbf{a}}(\F_q(V)) =r\} .\]
We know from Corollary~\ref{corQCar}
\[ U_k^{(3)} \subset R_k^{(0)}. \]
(Actually, in the previous section it was shown that all element of $U_k^{(3)}$ that correspond to minimal Weierstrass equations are contained in $R_k^{(0)}$. A hypersurfaces that is not a minimal Weierstrass equations is singular in codimension 2, hence is not contained in $U_k^{(3)}$. Later on we will show that the $\mathbf{a}$ defining hypersurfaces with non-isolated singularities have density zero, hence we might even completely disregard the non-minimal hypersurfaces.)

For a subset $S \subset \bigcup_i P_i$ define
\[ \mu(S) = \lim_{k\to \infty} \frac{\# S \bigcap\left( \bigcup_{i=1}^k P_i\right)}{\#   \bigcup_{i=1}^k\left( P_i \right)}\]
whenever this exists. With $\overline{\mu}(S)$ and $\underline{\mu}(S)$ we denote the same quantity where we replace $\lim$ by $\limsup$ or $\liminf$ respectively.

We will now prove that if $n\geq 4$ then $\mu(U_k^{(3)})=1$ and that $\mu(U_k^{(3)})=\zeta_V(3)^{-1}$ if $n=3$. This suffices to prove Theorem~\ref{mainThm}.

In this section we rely on several of the ideas presented by Poonen in \cite{Poo}. In the presentation we focus on the difference between our situation and that of Poonen.

\begin{lemma}\label{lemLowDeg}
 Let $V_{<r} \subset V$ be the subset of points of $V$ of degree  at most $r$. Define
\[ \cP_r:= \{ \mathbf{a} \in \cup_i P_i \mid W_{\mathbf{a}} \mbox{ is smooth at all points in } \pi_{\mathbf{a}}^{-1}(P) \mbox{ for all } P\in V_{<r} \}
\]
Then 
\[ \mu(\cP_{r}) = \prod_{P\in V_{<r} } (1-q^{-(m+1)\deg(P)}).\]
\end{lemma}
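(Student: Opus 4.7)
We follow Poonen's strategy from \cite{Poo}, adapted to our relative setting. Smoothness of $W_\mathbf{a}$ at a point $q\in\pi^{-1}(P)$ depends only on the image of $f_\mathbf{a}$ in $\cO_{\Ps(\cE_k),q}/\mathfrak{m}_q^2$, and hence only on the class of $\mathbf{a}$ modulo $\mathfrak{m}_P^2$. Let $J_P$ denote the finite $\F_{q^{\deg P}}$-vector space of such ``1-jets'' of tuples $(a_1,\ldots,a_6)$ at $P$; it has dimension $nN$ over $\F_{q^{\deg P}}$, where $n=\dim V+1$ and $N\in\{2,3,4\}$ is the number of coefficients allowed to be nonzero in the given characteristic. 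For $k$ large enough in terms of $r$, Serre vanishing makes the simultaneous restriction map
\[ \bigoplus_i H^0(\cL^{ik}) \longrightarrow \prod_{P\in V_{<r}} J_P \]
surjective, so the density $\mu(\cP_r)$ factors as the product over $P\in V_{<r}$ of the local density $\rho_P$ of 1-jets in $J_P$ yielding $W_\mathbf{a}$ smooth along $\pi^{-1}(P)$.

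Fix $P$ of degree $d=\deg P$. Decompose $J_P=V_0\oplus V_1$ into values at $P$ ($\F_{q^d}$-dimension $N$) and first-order Taylor coefficients in base directions ($\F_{q^d}$-dimension $N(n-1)$). By flatness of $\pi|_{W_\mathbf{a}}$, $W_\mathbf{a}$ is singular at $q\in\pi^{-1}(P)$ precisely when the fiber cubic $C_\mathbf{a}$ (determined by $V_0$) is singular at $q$ \emph{and} all base-direction partials $\partial_{t_j}f_\mathbf{a}(q)$ (linear in $V_1$) vanish. A Weierstrass cubic has at most one singular point, always in the chart $Z\neq 0$ (as $[0:1:0]$ is always smooth on $C_\mathbf{a}$), so the incidence variety $I=\{(v,q)\in V_0\times\pi^{-1}(P) : C_v\text{ singular at }q\}$ projects bijectively to its image. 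Explicit parametrizations, namely $x_0\mapsto(-3x_0^2,2x_0^3)$ for $p>3$, $(x_0,a_2)\mapsto(a_2,a_2x_0,-x_0^3+a_2x_0^2)$ for $p=3$, and $(x_0,y_0,a_1)\mapsto(a_1,a_1x_0,a_1y_0+x_0^2,y_0^2+a_1x_0y_0)$ for $p=2$, realize $I$ as affine $(N-1)$-space over $\F_{q^d}$; hence $|I(\F_{q^d})|=q^{d(N-1)}$.

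Since any such singular $q$ lies in $\{Z\neq 0\}$, the monomial $Z^3$ attached to $a_6$ is nonzero at $q$; varying only the first-order jet of $a_6$ shows that the linear map $V_1\to\F_{q^d}^{n-1}$, $w\mapsto(\partial_{t_j}f_\mathbf{a}(q))_j$, is surjective, so for each $(v,q)\in I$ the ``bad'' $w$'s form an affine subspace of size $q^{d(n-1)(N-1)}$. Summing over $I$ and dividing by $|J_P|=q^{dnN}$,
\[ 1-\rho_P \;=\; \frac{q^{d(N-1)}\cdot q^{d(n-1)(N-1)}}{q^{dnN}} \;=\; q^{-dn} \;=\; q^{-(m+1)\deg P}, \]
so indeed $m+1=n$. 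Multiplying over $P\in V_{<r}$ yields the claimed formula. The main technical point is the uniform exact count $|I(\F_{q^d})|=q^{d(N-1)}$ across the three characteristic cases, which is why we handle the parametrizations by hand; the remaining bookkeeping (that the limit defining $\mu(\cP_r)$ along the filtration $\bigcup_{i\leq k}P_i$ really equals the product of local densities) is exactly as in \cite{Poo}.
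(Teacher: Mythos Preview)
Your proof is correct and follows essentially the same approach as the paper's: reduce to a local computation at each $P\in V_{<r}$ via surjectivity of the restriction map for large $k$, then count the bad $1$-jets by parametrizing the singular fiber cubics and imposing the base-direction vanishing conditions. The paper carries out the local count by a direct case analysis (splitting according to whether $a_1(R)=0$ in characteristic~$2$ and whether $a_2(R)=0$ in characteristic~$3$), whereas you package the same computation more uniformly via the incidence variety $I$ and the splitting $J_P=V_0\oplus V_1$; your explicit parametrizations of $I$ as $\A^{N-1}$ are exactly the content of the paper's case-by-case count, and your surjectivity argument for $V_1\to\F_{q^d}^{\,n-1}$ via the $a_6 Z^3$ monomial replaces the paper's enumeration of how many choices of derivatives are ``bad''.
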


\begin{proof}
 Let $V_{<r}=\{R_1,\dots,R_s\}$. Let $t_1,\dots,t_m$ be local coordinates on $V$.

Suppose first that  $p>3$. Then $a_1=a_2=a_3=0$.
The hypersurface corresponding to a point $(0,0,0,a_4,a_6)\in P_k$ is singular at some point over $P_i$ of degree $e$ if and only if the $2\times (m+1)$ matrix
\[ \left( \begin{matrix}
       a_4(R_i) &    \frac{\partial a_4}{\partial t_1}(R_i) & \dots & \frac{\partial a_4}{\partial t_m}(R_i)\\ 

a_6(R_i)&           \frac{\partial a_6}{\partial t_1}(R_i) & \dots & \frac{\partial a_6}{\partial t_m}(R_i)\\
          \end{matrix}
\right)\]
is of the form
\[ \left( \begin{matrix}
       -3t^2 &  0 & \dots & 0 \\ 

2t^3 & 0 & \dots & 0 \\
          \end{matrix}
\right)
\mbox{ or } 
\left( \begin{matrix}
       -3\alpha^2 &  \mathbf{v} \\ 

2\alpha ^3 & \alpha \mathbf{v} \\
          \end{matrix}
\right).
\]
where $\alpha,t\in \F_{q^e}$ and $\mathbf{v} \in \F_{q^e}^m\setminus \{ 0 \}$.

Now, let $m_i$ be the ideal sheaf of $R_i$ and  $Y_i$ be the subscheme of $V$ corresponding to the ideal of $m_i^2$. Then the above conditions define a subset $S_i$ of the $2m+2$-dimensional $\F_{q^e}$-vector space $H^0(Y_i,\str_{Y_i}) \times  H^0(Y_i,\str_{Y_i})$ containing $q^{e(m+1)}$ elements.

Hence $(a_4,a_6)$ belong to  $\cP_r$ if and only if it is in the inverse image of 
\[\prod_{i=1}^s ((H^0(Y_i,\str_{Y_i}) \times H^0(Y_i,\str_{Y_i}))\setminus S_i)\]
under the map
\[ P_k \to H^0(Y,\cL^{2k}\otimes \str_Y) \times H^0(Y,\cL^{3k}\otimes \str_Y) \cong \prod_{i=1}^s ((H^0(Y_i,\str_{Y_i}) \times H^0(Y_i,\str_{Y_i}).\]
Since $\cL$ is ample it follows that for $k \gg 1$ the map
\[ H^0(V,\cL^{4k}) \times H^0(V,\cL^{6k}) \to H^0(Y,\cL^{2k}\otimes \str_Y) \times H^0(Y,\cL^{3k}\otimes \str_Y)\]
is surjective \cite[Lemma 2.1(a)]{Poo}. Hence
\[ \mu(\cP_r) = \prod_{i=1}^s \frac{ q^{2(m+1)\deg R_i}-q^{(m+1)\deg R_i}}{q^{2(m+1)\deg R_i}} = \prod_{i=1}^s (1-q^{-(m+1)\deg R_i}).\]

Suppose now that $p=2$. In this case  $a_1,a_3,a_4,a_6$ vary. There is a singular point of $W_{\mathbf{a}}$ in the fiber of $R$ if and only if
\begin{itemize}
 \item $a_1(R)\neq 0, x=a_3(R)/a_1(R), y=(3x^2+a_4(R))/a_1(R)$, for all $j=1,\dots m$ we have
\[ y^2+(a_1)_{t_j}(R)xy+(a_3)_{t_j}(R)y=x^3+(a_4)_{t_j}(R)x+(a_6)_{t_j}(R)\]
and
\[y^2+a_1(R)xy+a_3(R)y=x^3+a_4(R)x+a_6(R).\]
This defines a subset of 
$H^0(Y_i,\str_{Y_i})^4$ containing \[(q^{(m+1)\deg R}-q^{m\deg R})q^{2(m+1)\deg R} \] elements. (I.e., $a_2,a_3$ are free, $a_4$ does not vanish at $R_i$  and $a_6$ is completely determined by $a_2,a_3,a_4$.)
\item $a_1(R)=0, a_3(R)=0$, $3x^2=a_4(R)$ (this fixes $x$), $y^2=a_6(R)$ (this fixes $y$) and for all $j$ we have
\[ y^2+(a_1)_{t_j}(R)xy+(a_3)_{t_j}(R)y=x^3+(a_4)_{t_j}(R)x+(a_6)_{t_j}(R).\]
This defines a subset containing \[q^{2m\deg(R)} q^{(m+1)\deg R} q^{\deg(R)}=q^{(3m+2)\deg(R)}\] elements in $H^0(Y_i,\str_{Y_i})^4$. 
\end{itemize}
The union of these two sets consists of  $q^{3(m+1)\deg(R)}$ elements. Hence the fraction of good elements is
\[ 1-q^{-(m+1)\deg(R)},\]
as it was in the case $p>3$. The rest of the proof in this case is analogous to the case $p>3$.

If $p=3$ then  $a_2,a_4,a_6$ vary.
Assume first $a_2(R)\neq 0$ then in order to have a singularity of $W_{\mathbf{a}}$ in the fiber over $p$ we need to have $x=a_4(R)/a_2(R),y=0$. Now $a_6(R)$ and the value of its derivatives are completely determined by $a_2,a_4$ and their derivatives. 
 Hence we get a set of size
\[        (q^{(m+1)\deg(R)}-q^m) q^{(m+1)\deg(R)}.\]

If $a_2(R)=0$ then also $a_4(R)=0$, $a_6(R)$ is free and yields $x$, all partials of $a_6$ are determined by $x$ and the partials of $a_2,a_4$. Hence we get
\[ q^{2m\deg(R)} q^{\deg(R)}.  \]
bad elements.

In total we find
\[       (q^{(m+1)\deg(R)}-q^m) q^{(m+1)\deg(R)} +q^{2m\deg(R)} q^d=q^{2(m+1)\deg{R}}\]
bad elements. Hence the fraction of good elements is
\[ 1-\frac{q^{2(m+1)\deg(R)}}{q^{3(m+1)\deg(R)}} = 1-q^{(m+1)\deg(R)}. \]
We can finish this case analogous to the previous two cases.
\end{proof}

\begin{lemma}
 Let $R\in V$ be a closed point of degree $e$, with $e\leq k/(6m+6)$. Then the fraction of $\mathbf{a} \in  P_k$  such that the corresponding hypersurface $Y$ is singular at some point over $R$ equals $q^{-(m+1)e}$.
\end{lemma}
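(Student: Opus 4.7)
The plan is to reduce the global count on $P_k$ to a local count on the first-order neighborhood $Y_R$ of $R$ (the closed subscheme cut out by $\mathfrak{m}_R^2$) and then use the hypothesis $e\leq k/(6m+6)$ to make the relevant restriction map genuinely surjective, not merely surjective in a limit as in Lemma~\ref{lemLowDeg}.

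First I would set up the $\F_q$-linear evaluation map
\[
\rho : P_k \longrightarrow T_R := \prod_{i\in I_p} H^0(Y_R,\cL^{ik}\otimes\str_{Y_R}),
\]
where $I_p\subset\{1,2,3,4,6\}$ is the set of indices allowed to be nonzero in characteristic $p$. Smoothness of $W_{\mathbf{a}}$ at a point in the fibre over $R$ is detected by the values and first-order partials of the $a_i$ at $R$, so the event ``$W_{\mathbf{a}}$ is singular somewhere over $R$'' depends only on $\rho(\mathbf{a})$. Each factor of $T_R$ has cardinality $q^{(m+1)e}$, and the case-by-case calculations carried out inside the proof of Lemma~\ref{lemLowDeg} (for $p>3$, $p=3$ and $p=2$) show in each case that the bad subset of $T_R$ has density exactly $q^{-(m+1)e}$. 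I would simply recycle those counts verbatim, with no new casework.

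The substantive step is the surjectivity of $\rho$. It suffices, for each $i\in I_p$, to show that
\[
H^0(V,\cL^{ik}) \longrightarrow H^0(Y_R,\cL^{ik}\otimes \str_{Y_R})
\]
is surjective, or equivalently that $H^1(V,\cL^{ik}\otimes \mathcal{I}_{Y_R})=0$ with $\mathcal{I}_{Y_R}=\mathfrak{m}_R^2$. For the ample bundle $\cL$ on the smooth projective $V$, an effective Castelnuovo--Mumford / Serre-vanishing argument gives this as soon as the twist $ik$ dominates the length $(m+1)e$ of $Y_R$ by a fixed constant; the hypothesis $e\leq k/(6m+6)$ is precisely calibrated so that $ik\geq (6m+6)e\geq (m+1)e$ with room to spare, uniformly in $i\in I_p$ (which has $i\geq 1$). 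Once $\rho$ is known to be surjective, it is a surjection of finite abelian groups with all fibres of the same cardinality, and the bad locus in $P_k$ is exactly $\rho^{-1}$ of the bad locus in $T_R$, yielding the asserted density $q^{-(m+1)e}$.

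The main obstacle is this surjectivity step: the qualitative surjectivity used in Lemma~\ref{lemLowDeg} (via \cite[Lemma 2.1(a)]{Poo}) has to be upgraded to a quantitative linear bound in $e$. The constant $6(m+1)$ in the hypothesis is dictated by the largest twist $\cL^{6k}$ appearing in a Weierstrass equation together with the need to control the second symbolic power $\mathfrak{m}_R^2$ of the ideal of a closed point of degree $e$. Granting this effective vanishing, the rest of the proof is the bookkeeping contained in Lemma~\ref{lemLowDeg}.
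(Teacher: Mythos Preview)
Your outline is essentially the paper's own proof: restrict to the first-order neighborhood $Y_R$, recycle the local count from Lemma~\ref{lemLowDeg} to get the bad fraction $q^{-(m+1)e}$ in the target, and deduce the result from surjectivity of the restriction map $\rho$.

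The one place where you diverge from the paper is the justification of surjectivity. You invoke an ``effective Castelnuovo--Mumford / Serre-vanishing argument'' on $V$ to force $H^1(V,\cL^{ik}\otimes\mathfrak{m}_R^2)=0$ once $ik\gtrsim (m+1)e$. On an arbitrary smooth projective $V$ such an effective bound does not follow from Serre vanishing alone; the threshold would a priori depend on invariants of $V$ and $\cL$, not merely on the length of $Y_R$. The paper sidesteps this by using that $\cL$ is very ample: embed $V\hookrightarrow\Ps^N$ via $|\cL|$, so that sections of $\cL^{ik}$ on $V$ are restrictions of $H^0(\Ps^N,\str(ik))$, and then apply \cite[Lemma~2.1]{Poo}, which gives the elementary bound that $H^0(\Ps^N,\str(d))\to H^0(Y_R,\str_{Y_R})$ is surjective as soon as $d\geq \dim_{\F_q}H^0(Y_R,\str_{Y_R})-1=(m+1)e-1$. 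This is what makes the constant $6(m+1)$ in the hypothesis $e\leq k/(6m+6)$ work uniformly, independent of $V$. Replace your vanishing argument by this citation and your proof matches the paper's.
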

\begin{proof} Let $m_P$ be the maximal ideal of $P$, let $Y$ be the scheme associated with $m_P^2$, let $S\subset  P_k$ be the subset of bad elements constructed in the proof of the previous lemma.

Since $\cL$ is very ample we can use it to embed $V$ in projective space. From \cite[Lemma 2.1]{Poo} it follows that
\[ \psi_k :\cP_k \to\left\{ 
\begin{array}{cl} H^0(\str_Y(4k))\times H^0(\str_Y(6k)) & \mbox{if } p>3 \\
  H^0(\str_Y(2k))\times H^0(\str_Y(4k))\times H^0(\str_Y(6k)) & \mbox{if } p=3 \\
    H^0(\str_Y(k))\times H^0(\str_Y(3k))\times H^0(\str_Y(4k))\times H^0(\str_Y(6k)) & \mbox{if } p=2\end{array}\right.
\]
is surjective  if $6k>(m+1)e$.

Now, $\mathbf{a}\in M_k$ defines a  hypersurface singular at a point over $R$ if and only if it is mapped to a point in $S$ under $\psi_k$. In the previous lemma we proved that
\[ \frac{\# S}{\# H^0(Y,\str_Y)^g} = \frac{1}{q^{(m+1)e}}\]
From this the result follows.
\end{proof}

Analogously to  \cite[Lemma 2.4]{Poo} one obtains the following lemma:

\begin{lemma}
 Define 
\[ \cQ_r^{medium} = \left \{\mathbf{a} \in P_k \left| \begin{array}{l} \mbox{there exists a } P \in V \mbox{ with }\\ r\leq \deg P \leq \frac{k}{6(m+1)},\\ \mbox{ such that } Y \mbox{ is singular at a point over } P.\end{array}\right. \right\}\]
Then $\lim_{r\to \infty} \mu(\cQ_r^{medium}) =0$. 
\end{lemma}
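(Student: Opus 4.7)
The plan is to adapt \cite[Lemma 2.4]{Poo}: apply a union bound over the closed points of $V$ whose degree lies in the allowed window, and control the resulting sum by a crude count of such points.

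First, for each fixed $k$, combining a union bound with the previous lemma yields
\[
\frac{\#(\cQ_r^{medium}\cap P_k)}{\# P_k} \;\leq\; \sum_{\substack{P\in V\\ r\leq \deg P \leq k/(6(m+1))}} q^{-(m+1)\deg P}.
\]
The upper endpoint $\deg P \leq k/(6(m+1))$ in the definition of $\cQ_r^{medium}$ is chosen precisely so that the previous lemma applies to every term in the sum on the right, giving the exact per-point contribution $q^{-(m+1)\deg P}$.

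Next I would bound the number of closed points of $V$ of degree exactly $e$ by $\#V(\F_{q^e})/e \leq C\, q^{em}/e$ for some constant $C$ depending only on $V$ (here $m=\dim V$; this is a Lang--Weil-type estimate). Substituting yields
\[
\sum_{\substack{P\in V\\ r\leq \deg P \leq k/(6(m+1))}} q^{-(m+1)\deg P}
\;\leq\; \sum_{e=r}^{\infty} \frac{C\, q^{em}}{e}\cdot q^{-(m+1)e}
\;=\; C\sum_{e=r}^{\infty} \frac{q^{-e}}{e}.
\]
The right-hand side is independent of $k$ and is the tail of a convergent geometric series, hence tends to $0$ as $r\to\infty$.

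Finally, I would upgrade this slice-by-slice bound to a bound on $\overline{\mu}(\cQ_r^{medium})$. Because the estimate above holds for every $P_i$ with $i\leq k$ with the same constant on the right, the weighted average over $i\leq k$ appearing in the definition of $\mu$ satisfies the same inequality; passing to $\limsup$ in $k$ and then to $\lim$ in $r$ gives $\lim_{r\to\infty}\overline{\mu}(\cQ_r^{medium})=0$, and since $0\leq \underline{\mu}\leq \overline{\mu}$ this forces $\mu(\cQ_r^{medium})\to 0$.

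I do not foresee any serious obstacle: the argument is essentially a transcription of Poonen's medium-degree estimate to the graded setting used here, and the only place where the geometry of $V$ enters substantively is the Lang--Weil count, which is precisely what makes the sum over $e$ absolutely convergent (note the savings factor $q^{-e}$ after cancelling the $q^{em}$ against $q^{-(m+1)e}$).
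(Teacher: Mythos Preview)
Your proposal is correct and follows essentially the same approach as the paper, which simply refers the reader to \cite[Lemma 2.4]{Poo}; your write-up is precisely the adaptation of that lemma to the present setting, using the preceding lemma for the exact per-point probability $q^{-(m+1)\deg P}$, a union bound, and a Lang--Weil point count to control the tail.
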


\begin{proof}
 See \cite[Lemma 2.4]{Poo}.
\end{proof}

\begin{lemma}
 Define 
\[ \cQ^{high} = \left \{\mathbf{a} \in P_k \left| \begin{array}{l} \mbox{there exists a } P \in V \mbox{ with }\\ \deg P \geq \frac{k}{6(m+1)}\\ \mbox{ such that } Y \mbox{ is singular at a point over } P.\end{array}\right. \right\}\]
Then $\overline{\mu}(\cQ^{high}) =0$. 
\end{lemma}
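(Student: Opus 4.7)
My plan is to mirror Poonen's proof of the analogous high-degree lemma \cite[Lemma 5.4]{Poo}, with bookkeeping adjusted to handle the weighted product space $P_k$. Stratify by the degree $e$ of the offending point: $\cQ^{high} = \bigcup_{e \geq k/(6(m+1))} \cQ^{high}_e$, where $\cQ^{high}_e \subset P_k$ consists of those $\mathbf{a}$ such that $W_{\mathbf{a}}$ has a singularity in the fiber over some $P \in V$ of degree exactly $e$.

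For a fixed $P$ of degree $e$, pick local coordinates $t_1,\dots,t_m$ at $P$ over $\F_{q^e}$ and let $Y_P = \mathrm{Spec}(\str_V/\mathfrak{m}_P^2)$ be the first infinitesimal neighborhood. Repeating the local jet computation from Lemma~\ref{lemLowDeg}, the condition that $W_{\mathbf{a}}$ is singular at some $Q \in \pi_{\mathbf{a}}^{-1}(P)$ is equivalent to the image of $\mathbf{a}$ under the jet map $j_P : P_k \to H^0(Y_P,\str_{Y_P})^g$ lying in a constructible subset of fractional size $q^{-(m+1)e}$. Here $g \in \{2,3,4\}$ depends on $p$, and the $m+3$ local equations (vanishing of the defining equation and of its $x,y,t_1,\dots,t_m$-derivatives at the singular point) reduce to $m+1$ once the two fiber-direction equations are used to eliminate $(x_0,y_0)$.

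The key estimate, with a constant $C$ independent of $P$ and $k$, is that
\[ \#\{\mathbf{a} \in P_k : W_{\mathbf{a}} \text{ is singular over } P\} \leq C\cdot \#P_k\cdot q^{-(m+1)e}. \]
The subtlety, absent in Lemma~\ref{lemLowDeg}, is that $j_P$ need not be surjective when $e$ is of order $k$ or larger. Nonetheless, the $m+1$ singularity conditions can be viewed as $\F_{q^e}$-linear functionals on $P_k$, and for $k$ large the joint map $P_k \to \F_{q^e}^{m+1}$ is surjective: very ampleness of $\cL$ together with Serre vanishing produces, for each of the $m+1$ target directions, a section of the appropriate $\cL^{ik}$ realizing any prescribed first-order datum at $P$. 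Since a surjective $\F_{q^e}$-linear map from a finite $\F_q$-space has fibers of cardinality $\#P_k / q^{(m+1)e}$, multiplying by the size of the bad jet subset yields the displayed bound.

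Combining with the Weil bound $\#\{P \in V : \deg P = e\} = O(q^{me}/e)$ I then obtain
\[ \#\cQ^{high}_e \leq C'\cdot \#P_k\cdot q^{-(m+1)e}\cdot q^{me}/e = O\bigl(\#P_k \cdot q^{-e}/e\bigr), \]
and summing over $e \geq k/(6(m+1))$ gives $\#\cQ^{high} = O(\#P_k\cdot q^{-k/(6(m+1))}) = o(\#P_k)$, proving $\overline{\mu}(\cQ^{high})=0$. The principal obstacle is verifying the joint surjectivity of the $m+1$ local linear functionals on $P_k$: this transversality assertion --- whose Poonen-style analogue for ordinary linear systems is essentially folklore --- is the only place where the weighted product structure of $P_k$ enters non-trivially, and handling it uniformly as $e$ grows with $k$ is what forces the specific cutoff $e \geq k/(6(m+1))$.
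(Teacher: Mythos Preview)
Your argument has a genuine gap at the surjectivity step, and this gap is exactly the obstacle that forces Poonen (and the paper) to use the decoupling trick rather than the counting-and-summing approach you propose.

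You assert that ``for $k$ large the joint map $P_k \to \F_{q^e}^{m+1}$ is surjective,'' invoking very ampleness and Serre vanishing. But Serre vanishing for $\cL^{6k}\otimes \cI_{Y_P}$ gives surjectivity of the jet map only once $k$ is large \emph{relative to the scheme $Y_P$}, i.e.\ relative to $e$; it says nothing when $e$ grows with $k$. In fact surjectivity is impossible for $e$ large enough: $\dim_{\F_q} P_k$ is a polynomial in $k$ of degree $m=\dim V$, whereas $\dim_{\F_q}\F_{q^e}^{m+1}=e(m+1)$ is unbounded, so for $e\gg k^m$ the map cannot be onto. Your sum runs over all $e\ge k/(6(m+1))$ with no upper cutoff, so this regime is included. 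Without surjectivity the preimage of the bad jet set need not have density $q^{-(m+1)e}$: for a non-surjective $\F_q$-linear map the bad locus could lie entirely inside the image, making the conditional density much larger than $\#S/\#B$. (There is also a smaller issue: the singularity conditions are not literally $\F_{q^e}$-linear in $\mathbf a$, since the singular point $(x_0,y_0)$ in the fiber is itself determined by the $a_i(P)$; one has to stratify over $x_0$ first.)

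This is precisely why Poonen's Lemma~2.6 does \emph{not} proceed by bounding the probability at each $P$ and summing---that is the method of his Lemma~2.4 for medium-degree points, where the cutoff $e\le d/(m+1)$ guarantees surjectivity. For high-degree points Poonen instead writes $f=f_0+\sum_i h_i^p t_i+g^p$ so that the partials $D_i f=D_i f_0+h_i^p$ decouple, and then argues inductively that each successive vanishing locus drops dimension with probability $1-o(1)$. The paper's proof follows this decoupling strategy: fix $a_4$, write $a_6=f_0+\sum h_i^p t_i+h^p$, and track the dimension of the loci $W_i$ inside $V\times\A^1$. Your proposal, despite its opening sentence, is not a mirror of Poonen's high-degree argument but an attempted extension of the medium-degree one into a range where its key hypothesis fails.
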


\begin{proof}
We treat first the case $p>3$.

Fix a $a_4\in H^0(\cL^{4k})$.
Let $P\in V$ be a point, $t_1,\dots,t_m$ local coordinates.
If  the hypersurface corresponding to $(a_4,a_6)$ is singular over $P$ then 
 \begin{itemize}
 \item $x^2=\frac{1}{3} a_4(P)$,
 \item $(a_6)_{t_i}(P)=-x(a_4)_{t_i}(P)$ and
 \item $a_6(P)=-x^3-a_4(P)x$.
 \end{itemize}

 We show now that the density of $a_6$ satisfying the above three conditions is zero. In order to do this we partially decouple the derivatives as in \cite{Poo}. I.e., as in \cite[Lemma 2.6]{Poo} we may replace $V$ by an affine open, and we choose coordinates $t_1,\dots, t_m$ on $V$ as in \emph{loc. cit.} Write
 \[ a_6=f_0+\sum_i (h_i^p t_i) +h^p\]
 Then $D_i=Df_0+h_i^p$. Inside $V\times \A^1$ define $W_0$ as the vanishing set of $\{3x^2+a_4(P)\}$. Set $H_i=\{D_i a_6 = -xD_i a_4\}$. Let $W_i:=W_{i-1}\cap H_i$.  Similarly as in \cite[Lemma 2.6]{Poo} we get that, conditioned  on the choice of $f_0,h_1,\dots,h_i$ such that $\dim H_i=m-i$, the probability that 
$H_{i+1}$ has dimension $m-i-1$ is $1-o(1)$ and that, conditioned on the choice of $f_0,h_1,\dots,h_m$ such that $W_m$ is finite, the probability that $W_m\cap \{x^3+a_4(P)+a_6(P)\} \cap U_{\geq d/(m+1)}$ is empty is $1-o(1)$. The key assumption in this part of the proof is that $\deg(P)\geq k/6(m+1)$.
 
 From this we deduce that the probability that $y^2=x^3+a_4x+a_6$ is not smooth over a point of degree at least $d/(m+1)$ is bounded by $\prod_{i=0}^{m+1}(1-o(1))=1-o(1)$.

 In characteristic 2 and 3 can basically apply the same trick:
 
 In characteristic 2 we have additional polynomials $a_1,a_3$. We split our set up in the elliptic $n$-folds that have a singular point in a fiber over $P$ such that  $a_1(P)\neq 0$ and the set where such a singular point satisfies $a_1(P)=0$.
 
 In the former case the $x$ and $y$ are determined by $a_1(P),a_2(P)$ and $a_4(P)$ (see Lemma~\ref{lemLowDeg}). Hence given $a_1,a_2,a_4$ we look for $a_6\in H^0(\cL^{6k})$ such that
\begin{itemize}
\item $a_6(P)_{z_i}=y^2+a_1(P)_{z_i}xy+a_3(P)_{z_i} y-x^3-a_4(P)_{z_i} x$
\item $a_6(P)=y^2+a_1(P)xy+a_3(P)y-x^3-a_4(P)x$
\end{itemize}
We can decouple $a_6$ as before, showing that $P$ of degree at least $k/(6m+6)$ that satisfy the above system have density 0.

Now there might be singular points such that $a_1(P)=0$. This forces $a_3(P)=0$ and $3x^2-a_4(P)=0$ and $a_6(P)=y^2-x^3-a_4(P)$. So given $a_4,a_6$ the final two equations determine $x$ and $y$ uniquely (or there is no solution).

We first bound the set of $\mathbf{a}$ such that  a singularity in the fiber over $P$ with $a_1(P)=0$ and such that the $y$-coordinate does not vanish.
This means that  given $a_4,a_6$ we have a solution for $x$ and $y$. Since $y$ is non-zero we have:
\[ (a_3)_{z_i}(P) = \frac{(a_6)_{z_i}(P)xy-(a_4(P))_{z_i} x}{y} \]
In this case  we can decouple $a_3$ as above.
Now the final case, where $y=0, a_1(P)=0$ can be treated similarly.

In characteristic $3$ we have polynomials $a_2,a_4,a_6$. If $a_2(P)\neq 0$ then $2a_2(P)x+a_4(P)=0$, hence $x$ is determined. To study the density of the solutions of
\[ a_2(P)_{z_i}x^2+a_4(P)_{z_i}=-a_6(P)\]
and $x^3+a_2(P)x^2+a_4(P)x+a_6$ we can decouple the variables in $a_6$ and proceed as above.

In case that $a_2(P)=0$ then $a_4(P)=0$ and $x^3=-a_6(P)$. Hence $x$ is determined by $a_6$ and we can decouple with respect to $a_4$ provided that $x\neq 0$.

For the case that give rise to singular points with $x=0$ one notices that  $a_6$ is singular at $P$. This can be treated as in Poonen \cite[Lemma 2.6]{Poo}.
 \end{proof}

\begin{theorem} With notation as before
\[ \mu(U_k(3)) = \left\{\begin{array}{cl} 1 & \mbox{if } \dim V>2 \\ \zeta_V(3)^{-1} & \mbox{if } \dim V=2 \end{array}\right.\]
\end{theorem}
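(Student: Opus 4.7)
The plan is to sandwich $\mu(U_k^{(3)})$ between quantities controlled by the three preceding lemmas. Writing $n=\dim V+1$, the set $\cP_r\setminus(\cQ_r^{medium}\cup\cQ^{high})$ consists of those $\mathbf{a}$ such that $W_\mathbf{a}$ has no singular point over any closed point of $V$; this equals $U_k^{(n)}$. Lemma~\ref{lemLowDeg} yields $\mu(\cP_r)=\prod_{P\in V_{<r}}(1-q^{-n\deg P})$, which converges to $\zeta_V(n)^{-1}$ as $r\to\infty$ by the Euler product expansion of $\zeta_V$; this Euler product converges because $n>\dim V$. The next two lemmas give $\mu(\cQ_r^{medium})\to 0$ as $r\to\infty$ and $\overline\mu(\cQ^{high})=0$.

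For $\dim V=2$ we have $n=3$ and $U_k^{(3)}=U_k^{(n)}$: any singular point of $W_\mathbf{a}$ has codimension $3$ in $W_\mathbf{a}$, so ``smooth in codimension $3$'' forces $W_\mathbf{a}$ to be smooth. The inclusions
\[ \cP_r\setminus(\cQ_r^{medium}\cup\cQ^{high})\;\subseteq\; U_k^{(3)}\;\subseteq\; \cP_r \]
provide lower and upper bounds that both tend to $\zeta_V(3)^{-1}$ as $r\to\infty$, giving $\mu(U_k^{(3)})=\zeta_V(3)^{-1}$.

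For $\dim V>2$ the key geometric observation is that if $\mathbf{a}\notin \cQ_r^{medium}\cup \cQ^{high}$, then every singular point of $W_\mathbf{a}$ lies in a fiber of $\pi_\mathbf{a}$ over some closed point of $V$ of degree less than $r$. Since there are only finitely many such closed points and each fiber is a plane Weierstrass cubic with finite singular locus, $\Sing(W_\mathbf{a})$ is zero-dimensional. Because $\dim W_\mathbf{a}=n\geq 4$, this satisfies $\dim\Sing(W_\mathbf{a})\leq 0 \leq \dim V-3$, hence $\mathbf{a}\in U_k^{(3)}$. Therefore
\[ \mu(U_k^{(3)}) \;\geq\; 1 - \mu(\cQ_r^{medium}) - \overline\mu(\cQ^{high}), \]
and letting $r\to\infty$ yields $\mu(U_k^{(3)})=1$.

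The substantive technical work has been carried out in the three preceding lemmas, especially the decoupling-of-variables argument for $\cQ^{high}$ which handles the delicate characteristics $2$ and $3$. The only new ingredient here is the elementary observation that a Weierstrass plane cubic has finite singular locus, so singularities collected from finitely many base fibers amount to only finitely many singular points on $W_\mathbf{a}$; given that, the theorem follows by combining the three density estimates into the sandwiches above.
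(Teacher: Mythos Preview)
Your proof is correct and follows essentially the same route as the paper's. The paper simply cites Poonen's assembly argument for the case $\dim V=2$ (which is exactly your sandwich $\cP_r\setminus(\cQ_r^{medium}\cup\cQ^{high})\subseteq U_k^{(3)}\subseteq\cP_r$), and for $\dim V>2$ it uses the contrapositive of your observation: if $\mathbf a\notin U_k^{(3)}$ then $\Sing(W_{\mathbf a})$ is positive-dimensional, hence projects onto a positive-dimensional locus in $V$ (the fibers of $\pi|_{\Sing}$ being finite, as you note), so $\mathbf a\in\cQ^{high}$ directly---this avoids bringing $\cQ_r^{medium}$ into the $\dim V>2$ case, but the difference is cosmetic.
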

\begin{proof}
If $\dim V>2$ and $\mathbf{a}\in U_k^{(3)}$ then $W_{\mathbf{a}}$ has a singularity over a point $P$ of arbitrary high degree. From the previous lemma it follows that $\mu(U_k^{(3)})=0$.

If $\dim V=2$ the proof is the same as the proof of Theorem 1.2 in Poonen \cite[Section 2.4]{Poo}.
\end{proof}


\begin{thebibliography}{1}

\bibitem{BhaShaTer}
M.~Bhargava and A.~Shankar.
\newblock Ternary cubic forms having bounded invariants, and the existence of a
  positive proportion of elliptic curves having rank 0.
\newblock Preprint, available at \verb!arXiv:1007.0052v1!, 2010.

\bibitem{dJonES}
A.~J. de~Jong.
\newblock Counting elliptic surfaces over finite fields.
\newblock {\em Mosc. Math. J.}, 2(2):281--311, 2002.
\newblock Dedicated to Yuri I. Manin on the occasion of his 65th birthday.

\bibitem{SGA2}
A.~Grothendieck.
\newblock {\em Cohomologie locale des faisceaux coh\'erents et th\'eor\`emes de
  {L}efschetz locaux et globaux {$(SGA$} {$2)$}}.
\newblock North-Holland Publishing Co., Amsterdam, 1968.
\newblock Augment{\'e} d'un expos{\'e} par Mich{\`e}le Raynaud, S{\'e}minaire
  de G{\'e}om{\'e}trie Alg{\'e}brique du Bois-Marie, 1962, Advanced Studies in
  Pure Mathematics, Vol. 2.

\bibitem{ell3HK}
K.~Hulek and R.~Kloosterman.
\newblock Calculating the {M}ordell-{W}eil rank of elliptic threefolds and the
  cohomology of singular hypersurfaces.
\newblock Preprint, \texttt{arXiv:0806.2025v3}, to appear in \emph{Annales de
  l'Institut Fourier}, 2008.

\bibitem{KatzLfunc}
N.~M. Katz.
\newblock {\em Twisted {$L$}-functions and monodromy}, volume 150 of {\em
  Annals of Mathematics Studies}.
\newblock Princeton University Press, Princeton, NJ, 2002.

\bibitem{LEC}
J.~S. Milne.
\newblock {\em Lectures on Etale Cohomology v2.10}.
\newblock Available at www.jmilne.org/math, 2008.

\bibitem{Poo}
B.~Poonen.
\newblock Bertini theorems over finite fields.
\newblock {\em Ann. of Math. (2)}, 160:1099--1127, 2004.

\bibitem{ShiMW}
T.~Shioda.
\newblock On the {M}ordell-{W}eil lattices.
\newblock {\em Comment. Math. Univ. St. Paul.}, 39:211--240, 1990.

\bibitem{Waz}
R.~Wazir.
\newblock Arithmetic on elliptic threefolds.
\item {\em Compos. Math.}, 140:567--580, 2004.


\end{thebibliography}
\end{document}